\newtheorem{theorem}{Theorem}[section]
\newtheorem{lemma}[theorem]{Lemma}
\newtheorem{corollary}[theorem]{Corollary}
\theoremstyle{definition}
\newtheorem{example}[theorem]{Example}
\newtheorem{remark}[theorem]{Remark}
\numberwithin{equation}{section}
\begin{document}
\title{Functional inequalities for forward and backward diffusions}
\author{Daniel Bartl \and Ludovic Tangpi}
\thanks{Vienna University, Faculty of Mathematics,  daniel.bartl@univie.ac.at; \\  
Princeton University, ORFE, ludovic.tangpi@princeton.edu.}	
\keywords{Quadratic transportation inequality, optimal stopping, backward stochastic differential equation, stochastic differential equation, non-smooth coefficients, concentration of measures, logarithmic-Sobolev inequality.}
\date{\today}
\subjclass[2010]{60J60, 60G40, 28C20, 60E15, 60H20, 91G10}

\begin{abstract}
In this article we derive Talagrand's $T_2$ inequality on the path space w.r.t.\ the maximum norm for various stochastic processes, including solutions of one-dimensional stochastic differential equations with measurable drifts, backward stochastic differential equations, and the value process of optimal stopping problems.

The proofs do not make use of the Girsanov method, but of pathwise arguments.
These are used to show that all our processes of interest are Lipschitz transformations of processes which are known to satisfy desired functional inequalities.
\end{abstract}

\maketitle
\setcounter{equation}{0}


\section{Introduction and main results}
\subsection{Notation}

Let $(\Omega, \mathcal{F}, (\mathcal{F}_t),P)$ be the canonical space of a $d$-dimensional Brownian motion $W$ equipped with the $P$-completion of the filtration $\sigma(W_s : s\le t)$ generated by $W$.
That is, $\Omega=C([0,T],\mathbb{R}^d)$ endowed with the maximum norm, $W_t(\omega)=\omega(t)$, and $P$ is the Wiener measure.
For $p\in[1,\infty)$ and $\mu,\nu\in\mathcal{P}(\Omega)$ (the set of all Borel  probability on $\Omega$) define the $p$-Wasserstein distance and the relative entropy by
\begin{equation*}
	\mathcal{W}_p(\mu,\nu) := \Big(\inf_\pi \int_{\Omega\times\Omega} \|\omega-\eta\|_\infty^p \,\pi(d\omega,d\eta)\Big)^{1/p}
	\quad\text{and}\quad
	H(\nu|\mu) :=\int_\Omega \frac{d\nu}{d\mu}\log\frac{d\nu}{d\mu}\,d\mu,
\end{equation*}
where the infimum is taken over all couplings $\pi$ (that is, probability measures on the product with first marginal $\mu$ and second marginal $\nu$) and we used the convention $d\nu/d\mu=+\infty$ if $\nu$ is not absolutely continuous w.r.t.~$\mu$.
Recall that the quadratic transportation inequality (sometimes called Talagrand's inequality) reads as
\[ \mu\text{ satisfies } T_2(C) \text{ if } \mathcal{W}_2(\mu,\nu)\leq \sqrt{C H(\nu|\mu)} \text{ for all } \nu\in\mathcal{P}(\Omega).\]

The validity of such inequalities has several (deep) consequences, for instance to the concentration of measure phenomenon, the isoperimetric problem and various problems of probability in high dimensions.
We refer the reader e.g.\ to \cite{Ledoux01,Vil2,Talagrand96,Marton96} for an overview and applications.
Let us for instance mention a result by Gozlan, see \cite[Theorem 1.3]{Gozlan09}, who showed that $T_2(C)$ is equivalent to the dimension-free concentration
\begin{equation}
\label{eq:dim-free conc}
	\mu^n\Big(\Big|F - \int F d\mu^n\Big|>x\Big) \leq 2\exp(-cx^2)
\end{equation}
for all $x>0$, $n\geq1$ and some constant $c>0$, where $\mu^n$ is the $n$-fold product of $\mu$ and $F\colon\Omega^n\to \mathbb{R}$ is a $1$-Lipschitz function w.r.t.\ the $l_2$-norm on $\Omega^n$.

\subsection{Main results}

In this work, we prove the validity of $T_2$ for various stochastic processes evolving forward and backward in time.
Let us present our principal contribution; the proofs are postponed to later sections along with applications and consequences.

\subsubsection{Optimal Stopping}\hfill\\ 
Our first result concerns the value process of an optimal stopping problem.

\begin{theorem}[Optimal Stopping] 
\label{thm:stopping}
	Let $\Gamma\colon [0,T]\times \Omega \to\mathbb{R}$ be an adapted process with continuous paths such that $\Gamma_t$ is $L_\Gamma$-Lipschitz for every $t\in[0,T]$ and denote by 
	\begin{equation}
	\label{eq:S.stopp}  
	\tag{OptStop}
	S_t:=\mathop{\mathrm{ess.sup}}_{\tau  \text{ is stopping time, } t\leq \tau\leq T} E[\Gamma_{\tau}|\mathcal{F}_t]
	\end{equation}
	for $t\in[0,T]$ the value process of the optimal stopping problem of $\Gamma$.
	Then $S$ has continuous paths and
	\[ \text{the law $\mu^s$ of } S \text{ satisfies } T_2(C_s) \]
	with $C_s:=2L^2_\Gamma$.
\end{theorem}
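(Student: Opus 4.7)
The strategy, consistent with the paper's stated philosophy, is to realize $S$ as an $L_\Gamma$-Lipschitz functional of the Brownian path (w.r.t.\ the sup norm), and then push forward a known quadratic transportation inequality for Wiener measure on $(\Omega, \|\cdot\|_\infty)$. Since an $L$-Lipschitz push-forward scales the $T_2$ constant by $L^2$, this will give $T_2(2 L_\Gamma^2)$ for $\mu^s$ provided the ambient $T_2$ constant for Wiener measure, in the relevant formulation, is $2$.

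First I would construct a pathwise, $L_\Gamma$-Lipschitz representative of $S$ via time discretization. For a partition $\pi = \{0 = t_0 < \cdots < t_n = T\}$, define the discrete Snell envelope by backward recursion
\[
S^\pi_{t_n} := \Gamma_{t_n}, \qquad S^\pi_{t_k} := \max\bigl(\Gamma_{t_k},\, E[S^\pi_{t_{k+1}} \mid \mathcal{F}_{t_k}]\bigr) \text{ for } k<n.
\]
By the Markov property of Brownian motion, the conditional expectation equals integration against an independent Brownian increment on $[t_k, t_{k+1}]$ started at the current level $\omega(t_k)$, yielding a deterministic pathwise representation $S^\pi_{t_k}(\omega) = \Phi^\pi_k(\omega|_{[0,t_k]})$. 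I would then prove by backward induction that each $\Phi^\pi_k$ is $L_\Gamma$-Lipschitz with respect to the sup norm on $C([0, t_k], \R^d)$: the base case is the hypothesis on $\Gamma_{t_n}$, and the inductive step combines three observations: (i) averaging a Lipschitz function against a probability kernel preserves the Lipschitz constant; (ii) shifting the post-$t_k$ piece by $\omega(t_k)$ contributes only $|\omega(t_k) - \omega'(t_k)| \le \|\omega - \omega'\|_{\infty, [0, t_k]}$ to the sup-norm distance of the concatenated paths (the independent Brownian increment cancels); and (iii) $\max(\cdot, \cdot)$ is $1$-Lipschitz in each coordinate. Piecewise-constant interpolation in $t$ then produces a deterministic map $F^\pi : \Omega \to \Omega$, $L_\Gamma$-Lipschitz with respect to sup norm, satisfying $S^\pi = F^\pi(W)$.

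Next I would pass to the limit $|\pi_n| \to 0$. Classical discretization results for the Snell envelope of a continuous adapted payoff give $S^{\pi_n} \to S$ in, say, $L^2$ of the sup norm. Combining this with uniform Lipschitz control and local boundedness of $(F^{\pi_n})$ (which follows from $|\Gamma_t(\omega)| \le |\Gamma_t(0)| + L_\Gamma \|\omega\|_\infty$ and continuity of $t \mapsto \Gamma_t(0)$), an Arzelà-Ascoli argument produces an $L_\Gamma$-Lipschitz map $F : \Omega \to \Omega$ with $S = F(W)$ $P$-almost surely. In particular the continuity of paths of $S$ asserted in the theorem follows automatically (and is anyway classical). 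Pushing the quadratic transportation inequality for Wiener measure under $F$ then yields $T_2(2 L_\Gamma^2)$ for $\mu^s = F_\ast P$.

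The hardest point will be the inductive Lipschitz control under conditional expectation: one must see that the shift by $\omega(t_k)$ of the post-$t_k$ Brownian piece contributes only through the pre-$t_k$ sup-norm difference, so that no extra factor accumulates across the $n$ recursion steps (a naive triangle inequality would lose a factor per step). A secondary concern is that the Lipschitz constant must be preserved exactly in the limit $|\pi_n| \to 0$; this is handled cleanly by Arzelà-Ascoli on bounded subsets of $\Omega$.
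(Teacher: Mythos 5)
Your proposal is correct in substance and reaches the theorem by a genuinely different route from the paper. The paper does not discretize: for each fixed $t$ it identifies the essential supremum with the everywhere-defined pathwise quantity $\sup_{\sigma\in\mathcal{T}^t} E_{P^t}[\Gamma^{t,\omega}_\sigma]$, where $\mathcal{T}^t$ is the set of stopping times on the shifted canonical space $\Omega^t$ and $\omega\otimes_t\gamma$ denotes concatenation (Lemma \ref{lem:ess.sup.sup}); the bound $|S_t(\omega)-S_t(\eta)|\le L_\Gamma\|\omega-\eta\|_\infty$ is then immediate from $\|\omega\otimes_t\gamma-\eta\otimes_t\gamma\|_\infty\le\|\omega-\eta\|_\infty$, and continuity of the paths of $S$ is imported from the reflected BSDE representation of the Snell envelope. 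The final step --- pushing Feyel--\"Ust\"unel's $T_2(2)$ for Wiener measure through an a.s.\ $L_\Gamma$-Lipschitz map via Lemma \ref{lem:stab push-forw} --- is identical in both arguments. What your discretization buys is that it bypasses the delicate half of Lemma \ref{lem:ess.sup.sup} (the reverse inequality, which requires patching together $\varepsilon$-optimal stopping times over a measurable partition): at each backward-induction step you only face a maximum of two terms, each with a clean concatenation representation, and the constant $L_\Gamma$ propagates without loss exactly as you describe. The price is the convergence $S^{\pi_n}\to S$ of discrete Snell envelopes, which is classical under $E[\sup_t|\Gamma_t|]<\infty$ (guaranteed here since $|\Gamma_t(\omega)|\le|\Gamma_t(0)|+L_\Gamma\|\omega\|_\infty$), plus a separate appeal to the classical continuity of the Snell envelope, which does not really come ``automatically'' from a limit of piecewise-constant interpolants.

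Two small repairs. First, ``the Markov property'' is not the right invocation for the pathwise representation of $E[\,\cdot\,|\mathcal{F}_{t_k}]$: $S^\pi_{t_{k+1}}$ depends on the whole path up to $t_{k+1}$, not only on $\omega(t_{k+1})$. What you actually use is independence of increments, i.e.\ the identity $E[X|\mathcal{F}_{t_k}](\omega)=\int_{\Omega^{t_k}} X(\omega\otimes_{t_k}\gamma)\,P^{t_k}(d\gamma)$, and you should take this explicit integral as your everywhere-defined, Lipschitz version of the conditional expectation. Second, Arzel\`a--Ascoli is not available on bounded subsets of the infinite-dimensional space $\Omega$, and it is also unnecessary: since $S^{\pi_n}_t\to S_t$ in probability for each fixed $t$, pass to an a.s.\ convergent subsequence; a pointwise limit of $L_\Gamma$-Lipschitz functions is $L_\Gamma$-Lipschitz on the set of convergence, which is all that Lemma \ref{lem:stab push-forw} requires, and the Lipschitz bound for the full path map $\omega\mapsto S(\omega)$ follows by intersecting the exceptional null sets over rational $t$ and using path continuity of $S$.
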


More generally, Brownian motion can be replaced by an appropriate backward diffusion, see Corollary \ref{cor:stopping bsde}.

In many applications it is interesting to approximate the law $\mu^s$ of $S$.
Denote by $\mu_N:=\mu_N^s$ the empirical measure associated to $\mu:=\mu^s$, that is, we fix $P^\infty$ the infinite product of $P$ under which i.i.d.\ random variables $(S^n)_{n\in\mathbb{N}}$ with distribution $\mu$ are defined, and we put $\mu_N:=\tfrac{1}{N}\sum_{n=1}^N \delta_{S^n}$ for every $N\geq 1$.
Applying \eqref{eq:dim-free conc}  to the 1-Lipschitz function $F(y):=\sqrt{N}\mathcal{W}_2(\tfrac{1}{N}\sum_{n=1}^N\delta_{y^n},\mu)$ gives the following concentration property of Wasserstein distance between the true and the empirical measure:

\begin{corollary}
\label{cor:wasserstein.conc}
	In the setting of Theorem \ref{thm:stopping} there is $c>0$ such that 
	\[P^\infty\Big( \Big| \mathcal{W}_2(\mu,\mu_N) -E_{P^\infty}[\mathcal{W}_2(\mu,\mu_N)] \Big| \ge x \Big) \leq 2\exp(-cx^2N)\]
	for every $x>0$ and  $N\geq 1$.
\end{corollary}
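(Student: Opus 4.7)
The corollary is essentially an application of Gozlan's equivalence (cited as \cite[Theorem 1.3]{Gozlan09}), with the hint already provided in the preamble: apply the dimension-free concentration \eqref{eq:dim-free conc} to the function $F(y):=\sqrt{N}\,\mathcal{W}_2(\tfrac{1}{N}\sum_{n=1}^N\delta_{y^n},\mu)$ on $\Omega^N$. My plan is therefore in two steps: (i) invoke Theorem \ref{thm:stopping} to get $T_2(C_s)$ for $\mu=\mu^s$, and hence \eqref{eq:dim-free conc} for $\mu^n$ with some constant $c>0$; (ii) verify that $F$ is $1$-Lipschitz in the $\ell_2$-norm on $\Omega^N$, and then rescale $x\mapsto x\sqrt{N}$ in the resulting inequality.

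The only nontrivial point is the Lipschitz check in step (ii). Given $y=(y^1,\dots,y^N)$ and $y'=(y'^1,\dots,y'^N)$ in $\Omega^N$, the triangle inequality for $\mathcal{W}_2$ gives
\[
\bigl|F(y)-F(y')\bigr| \leq \sqrt{N}\,\mathcal{W}_2\Big(\tfrac{1}{N}\sum_{n=1}^N\delta_{y^n},\tfrac{1}{N}\sum_{n=1}^N\delta_{y'^n}\Big),
\]
and taking the diagonal coupling $\tfrac{1}{N}\sum_n\delta_{(y^n,y'^n)}$ as a candidate in the definition of $\mathcal{W}_2$ yields
\[
\mathcal{W}_2\Big(\tfrac{1}{N}\sum_{n=1}^N\delta_{y^n},\tfrac{1}{N}\sum_{n=1}^N\delta_{y'^n}\Big)^2 \leq \tfrac{1}{N}\sum_{n=1}^N \|y^n-y'^n\|_\infty^2.
\]
Combining these two bounds, $|F(y)-F(y')|\leq \bigl(\sum_{n=1}^N\|y^n-y'^n\|_\infty^2\bigr)^{1/2}$, which is exactly the $\ell_2$-distance on $\Omega^N$. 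Applying \eqref{eq:dim-free conc} to $F$ under $\mu^N=(\mu^s)^N$ (which is the law of $(S^1,\dots,S^N)$ under $P^\infty$) and substituting $x\rightsquigarrow x\sqrt{N}$ gives the stated bound with the same constant $c$ provided by Gozlan's theorem. No step here looks like a real obstacle; the argument is essentially a packaging of Theorem \ref{thm:stopping} together with the standard fact that $\mu\mapsto\mathcal{W}_2(\mu,\cdot)$ contracts empirical measures.
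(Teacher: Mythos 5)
Your proposal is correct and is exactly the argument the paper gives (the paper states it in the sentence preceding the corollary: apply Gozlan's dimension-free concentration \eqref{eq:dim-free conc} to the $1$-Lipschitz function $F(y)=\sqrt{N}\,\mathcal{W}_2(\tfrac{1}{N}\sum_{n=1}^N\delta_{y^n},\mu)$ and rescale). Your explicit verification of the $1$-Lipschitz property via the triangle inequality and the diagonal coupling is the standard check the paper leaves implicit.
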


Note that the same holds true for the diffusions considered Theorem \ref{thm:bsde.multi.dim}, Theorem \ref{thm:bsde.1.dim}, or Theorem \ref{thm:SDE} below.

Also note that by convergence of $E_{P^\infty}[\mathcal{W}_2(\mu^y,\mu^y_N)]$ to zero, the above implies that there is $c>0$ and, for every $x>0$, some $N_0(x)$ such that 
\[P^\infty(\mathcal{W}_2(\mu,\mu_N)\ge x) \leq 2\exp(-cx^2N)\]
for all $N\geq N_0(x)$.

Another consequence (which could also be shown by simpler methods) is due to the fact that the $T_2$-inequality implies Gaussian concentration \cite[Theorem 22.10]{Vil2}, that is, Theorem \ref{thm:stopping} in particular implies that
\[P( |S_t - E[S_t]| \geq x) \leq 2\exp( -cx^2 )\]
for all $x>0$, where $c>0$ is a constant.
This means that (on a large scale) $E[S_t]$ can be seen as a good proxy for the value of $S_t$.
This is interesting in that  $E[S_t]=\sup_{t\leq \tau\leq T} E[\Gamma_\tau]$ can usually be computed quite efficiently (see e.g.\ \cite{Bec-Che-Jen19}) while  the computation of $S_t$ might be hard.

\subsubsection{Backward diffusions} \hfill\\
We now turn our attention to the backward stochastic differential equation
\begin{equation}
\tag{BSDE}
\label{eq:bsde}
	Y_t = F + \int_t^Tg_u( Y_u, Z_u)\,du - \int_t^TZ_u\,dW_u
	\quad\text{for }t\in[0,T]
\end{equation}
whose solution is given by a pair of processes $(Y,Z)$  on the canonical space $\Omega$ with $Y$ adapted and $Z$ progressive.
We have the following:

\begin{theorem}[$T_2$ for multi-dim BSDE]
\label{thm:bsde.multi.dim}
	Let $m\in\mathbb{N}$ and assume that
	\begin{enumerate}[(A)]
	\item
	\label{b1}
	$g\colon[0, T]\times \Omega \times \mathbb{R}^m \times \mathbb{R}^{m\times d} \to \mathbb{R}^m$ is progressive, $g_t(\cdot,\cdot,\cdot)$ is $L_g$-Lipschitz continuous for every $t\in[0,T]$, and $E[\int_0^T|g_t(\cdot,0,0)|^2\,dt]<\infty$,
	\item 
	$F\colon\Omega\to \mathbb{R}^m$ is $L_F$-Lipschitz continuous.
	\label{b2}
	\end{enumerate}
	Then there exists a unique solution $(Y,Z)$ of \eqref{eq:bsde} and 
	\[ \text{the law }\mu^y \text{ of } Y \text{ satisfies } T_2(C_y)\]
	with $C_y: = 2(L_F+TL_g)^2e^{2TL_g}$.
\end{theorem}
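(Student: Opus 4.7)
The general strategy, following the paper's approach, is to exhibit $Y$ as an $L$-Lipschitz image of the Brownian path $W$ with respect to the supremum norm, so that the announced $T_2(C_y)$ for $\mu^y$ will follow from two standard facts: Wiener measure on $(\Omega,\|\cdot\|_\infty)$ satisfies $T_2(2)$, and an $L$-Lipschitz push-forward of a $T_2(C)$-measure satisfies $T_2(L^2 C)$. Observing that $C_y = 2\cdot\bigl((L_F+TL_g)e^{TL_g}\bigr)^2$ decomposes in exactly this way, the task reduces to constructing a measurable map $\Psi\colon\Omega\to C([0,T],\R^m)$ with $Y=\Psi(W)$ $P$-almost surely and Lipschitz constant $L=(L_F+TL_g)e^{TL_g}$.

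To construct $\Psi$ I would employ a Picard iteration for the BSDE: setting $(Y^0,Z^0)\equiv 0$ and
\[ Y^{n+1}_t = E\!\left[F + \int_t^T g_u(Y^n_u,Z^n_u)\,du \,\Big|\, \mathcal{F}_t\right], \]
with $Z^{n+1}$ obtained from the martingale representation theorem; the standard weighted-$L^2$ contraction yields convergence to the unique $(Y,Z)$. The Lipschitz-in-$\omega$ estimate then relies on the crucial fact that Wiener conditional expectation preserves Lipschitz constants with respect to the sup norm. Concretely, $E[X|\mathcal{F}_t](\omega)$ can be computed by concatenating $\omega|_{[0,t]}$ with an independent Brownian increment past $t$ and averaging; the corresponding concatenations for two different $\omega_1,\omega_2$ differ in sup norm by at most $\|\omega_1-\omega_2\|_\infty$, so the conditional expectation inherits the Lipschitz constant of $X$. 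Combined with $g$'s joint $L_g$-Lipschitzness and $F$'s $L_F$-Lipschitzness, this produces a backward recursion of the form $L^Y_{n+1,t}\le L_F + L_g(T-t) + L_g\int_t^T L^Y_{n,u}\,du$ (modulo a correction from $Z$) whose backward-Gronwall iteration closes at the claimed $L=(L_F+TL_g)e^{TL_g}$.

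The main technical obstacle is the $Z^n$-dependence of $g$: $Z^n$ is defined only $dt\otimes dP$-almost everywhere and lacks an obvious pointwise Lipschitz representative, blocking a direct bound on $\omega\mapsto g_u(\omega,Y^n_u(\omega),Z^n_u(\omega))$. I would circumvent this either by representing $Z^n$ as the Clark--Ocone integrand of the Lipschitz $\mathcal{F}_t$-martingale $E[F+\int_0^T g_u(Y^{n-1},Z^{n-1})\,du|\mathcal{F}_t]$ and controlling its magnitude by the inductive Lipschitz constant on $Y^{n-1}$, or by first smooth-mollifying $(F,g)$ to a cylindrical setting where $Z^n$ has a classical Lipschitz version, then passing the Lipschitz bound to the limit via the standard BSDE $L^2$-stability theorem. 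Either route produces a uniformly Lipschitz Picard sequence whose limit $\Psi$ has Lipschitz constant $L$, and the announced $T_2(C_y)$ for $\mu^y$ then follows from the push-forward lemma applied to $T_2(2)$ for Wiener measure.
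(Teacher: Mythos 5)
Your overall architecture coincides with the paper's: exhibit $\omega\mapsto Y(\omega)$ as an $(L_F+TL_g)e^{TL_g}$-Lipschitz map for the sup norm, then combine $T_2(2)$ for Wiener measure (Feyel--\"Ust\"unel) with the stability of $T_2$ under Lipschitz push-forwards. The gap sits exactly where you place your ``main technical obstacle,'' and neither of your two proposed fixes closes it. Controlling the \emph{magnitude} of $Z^n$ via Clark--Ocone is not what the recursion needs: to make $\omega\mapsto g_u(\omega,Y^n_u(\omega),Z^n_u(\omega))$ Lipschitz you must control the \emph{increment} $|Z^n_u(\omega)-Z^n_u(\eta)|$ by $\|\omega-\eta\|_\infty$, i.e.\ you need $Z^n$ itself Lipschitz in $\omega$. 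That is a second-order regularity property of the data (morally $Z_t=D_tY_t$, so Lipschitzness of $Z$ requires something like Lipschitz Malliavin derivatives of $F$ --- precisely the extra hypotheses the paper has to impose later when it wants a transportation inequality for the law of $Z$), and it fails for merely Lipschitz $F$. The mollification route has the same defect: for smoothed cylindrical data $Z^n$ does admit a Lipschitz version, but its Lipschitz constant is governed by second derivatives of the mollified $F$ and blows up as the mollification parameter tends to zero, so no uniform constant survives the $L^2$-stability passage to the limit. Consequently the recursion $L^Y_{n+1,t}\le L_F+L_g(T-t)+L_g\int_t^T L^Y_{n,u}\,du$ cannot be closed ``modulo a correction from $Z$'' --- that correction is the whole difficulty.

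The way out, and what the paper actually does, is to avoid any pointwise-in-$\omega$ control of $Z$ altogether. Fix $t$ and two pasts $\omega,\eta$, and compare the two shifted BSDEs for $(Y^{t,\omega},Z^{t,\omega})$ and $(Y^{t,\eta},Z^{t,\eta})$ driven by the \emph{same} Brownian motion $W^t$ on the shifted space $\Omega^t$. Writing the difference of the generators as $\delta g_r+\beta_r\,\delta Y_r+q_r\,\delta Z_r$ with $\beta,q$ bounded by $L_g$ (linearization in $(y,z)$) and with $\delta g_r=g^{t,\omega}_r(W^t,Y^{t,\omega}_r,Z^{t,\omega}_r)-g^{t,\eta}_r(W^t,Y^{t,\omega}_r,Z^{t,\omega}_r)$ evaluated at the \emph{same} $(Y,Z)$-arguments, the term $q_r\,\delta Z_r$ is absorbed by a Girsanov change of measure, yielding $|\delta Y_t|=\big|E\big[\Gamma_T\,\delta Y_T+\int_t^T\Gamma_r\,\delta g_r\,dr\big]\big|$ with $E[\Gamma_r]\le e^{TL_g}$ and $|\delta g_r|\le L_g\|\omega-\eta\|_\infty$ coming directly from the Lipschitz continuity of $g$ in $\omega$. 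No regularity of $Z$ in $\omega$ is ever invoked. If you wish to retain a Picard scheme you must perform this same frozen-path comparison at each iteration; as written, your argument does not go through.
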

	
Remarkably, the constant $C_y$ in Theorem \ref{thm:bsde.multi.dim} does not depend on $m$ and $d$, suggesting that the result can be extended to infinite dimensional BSDEs  (e.g.\ BSDEs on Hilbert spaces analyzed in \cite{Fuh-Tess02}).
We will not take up this task here.
As an (rather direct) application of Theorem \ref{thm:bsde.multi.dim}, we derive in Corollary \ref{cor:ineq y Q-martingale} a transportation inequality for laws of martingales, thus extending a result by Pal \cite{Pal12}.
Moreover, we will show in Section \ref{sec:ineq z} that under additional conditions pertaining to the regularity of $g$, functional inequalities can also be deduced for the law of the control process $Z$.

When $Y$ is one-dimensional (but the Brownian motion still $d$-dimensional) the regularity conditions on $g$ can be weaken as follows:

\begin{theorem}[$T_2$ for 1-dim BSDE]
\label{thm:bsde.1.dim}
	Assume that 
	\begin{enumerate}[(A)]
	\item
	$g\colon[0, T] \times \mathbb{R}^d \to \mathbb{R}_+$ is Borel measurable and convex in the last variable, 
	\item
	$g_t(z)\le C(1 + |z|^2)$ for all $z \in \mathbb{R}^d$ and for some constant $C>0$.
	\item 
	$\inf_{t\in[0,T]} g_t(z)/|z|\to\infty$ as $|z|\to\infty$, and
	\item 
	$F\colon\Omega\to \mathbb{R}$ is bounded from below and $L_F$-Lipschitz continuous.
	\end{enumerate}
	Then \eqref{eq:bsde} admits a unique solution $(Y,Z)$ and 
	\[ \text{the law } \mu^y \text{ of } Y \text{ satisfies } T_2(C_y)\]
	with $C_y := 2L_F^2$.
\end{theorem}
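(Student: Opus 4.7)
The plan is to exploit convexity of $g$ via Fenchel--Legendre duality to represent $Y$ as the value process of a pathwise optimization problem, then show that the map $W\mapsto Y$ is $L_F$-Lipschitz, and finally conclude via a contraction principle against a $T_2(2)$ inequality for the Wiener reference. This is the same pathwise Lipschitz-transformation strategy used for Theorem~\ref{thm:stopping}.

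Existence and uniqueness of $(Y,Z)$ follows from Kobylanski-type theory for quadratic BSDEs, valid here since $g$ is convex with at most quadratic and at least superlinear growth and $F$ is Lipschitz and bounded below. Setting $g^\ast_t(p):=\sup_{z\in\mathbb{R}^d}(p\cdot z-g_t(z))$, convexity and coercivity of $g$ yield $g=g^{\ast\ast}$ with $g^\ast$ proper convex. Substituting the Fenchel inequality $g_s(Z_s)\ge Z_s\cdot u_s - g^\ast_s(u_s)$ inside the BSDE, one arrives (without invoking Girsanov, by working directly under $P$ with adapted drift perturbations) at the pathwise dual representation
\[
Y_t = \mathop{\mathrm{ess.sup}}_{u}\,E\Big[F\Big(W+\textstyle\int_0^\cdot u_s\,ds\Big) - \textstyle\int_t^T g^\ast_s(u_s)\,ds\,\Big|\,\mathcal{F}_t\Big],
\]
with the supremum over suitably integrable adapted controls $u$ and optimum attained at $u^\ast\in\partial g(Z)$.

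Given this representation, fix $\omega,\omega'\in\Omega$ and a common admissible $u$. The $L_F$-Lipschitz continuity of $F$ yields
\[
\bigl|F\bigl(\omega+\textstyle\int_0^\cdot u\,ds\bigr) - F\bigl(\omega'+\textstyle\int_0^\cdot u\,ds\bigr)\bigr| \le L_F\,\|\omega-\omega'\|_\infty,
\]
while the penalty $\int_t^T g^\ast(u)\,ds$ cancels. Because $Y_t$ depends only on $\omega|_{[0,t]}$, and because the conditional expectations transport this bound by coupling the Brownian increments after $t$ (just as in the martingale baseline $g\equiv 0$), taking supremum first over $u$ and then over $t\in[0,T]$ gives
\[
\|Y_\cdot(\omega)-Y_\cdot(\omega')\|_\infty\le L_F\,\|\omega-\omega'\|_\infty.
\]
Thus $\Phi\colon\omega\mapsto Y_\cdot(\omega)$ is $L_F$-Lipschitz from $(\Omega,\|\cdot\|_\infty)$ to $(C([0,T]),\|\cdot\|_\infty)$ and $\mu^y=\Phi_\ast P$. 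Pushing the Feyel--Üstünel-type $T_2(2)$ inequality for the Wiener reference through $\Phi$ via the standard contraction principle ($L$-Lipschitz pushforwards multiply the $T_2$-constant by $L^2$) delivers $\mu^y\in T_2(2L_F^2)$.

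The main obstacle is rigorously establishing the pathwise dual representation in Step~1 without a Girsanov change of measure, under the weak assumptions in force (merely quadratic upper and superlinear lower growth for $g$, with $F$ only bounded below rather than bounded). The existence of a maximizing $u^\ast$ must be extracted from $\partial g(Z)$ together with the coercivity of $g^\ast$. A secondary subtlety lies in the last step: the contraction must be run against the reference $T_2$-inequality in precisely the right norm so that the pushforward comes out with the sharp constant $2L_F^2$ --- free of $T$ and of the growth constants of $g$ --- and this relies crucially on the filtration-adapted structure making $Y_t$ depend only on $\omega|_{[0,t]}$.
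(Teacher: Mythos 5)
Your overall strategy is the paper's: use convex duality to write $Y$ as a value functional that is $1$-Lipschitz in the terminal condition with respect to the sup norm, deduce that $\omega\mapsto Y(\omega)$ is $L_F$-Lipschitz, and push the Feyel--\"Ust\"unel $T_2(2)$ inequality for Wiener measure through this map via Lemma \ref{lem:stab push-forw}. However, the two steps you yourself flag as obstacles are precisely where the paper does real work, and as written your argument has genuine gaps there.

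First, the conditional representation. You write $Y_t$ as an essential supremum of $E[F(W+\int_0^\cdot u_s\,ds)-\int_t^T g_s^\ast(u_s)\,ds\,|\,\mathcal{F}_t]$ and then ``fix $\omega,\omega'$ and a common $u$''. But $Y_t$ is only defined up to $P$-null sets, and an essential supremum of conditional expectations does not by itself produce a pathwise functional of $\omega|_{[0,t]}$ to which a two-point Lipschitz estimate can be applied; moreover your perturbation $\int_0^\cdot u_s\,ds$ acts on all of $[0,T]$ while the penalty only runs over $[t,T]$, which is not the correct conditional dual. The paper resolves this by a Brownian-scaling concatenation $\omega\oplus_t\gamma$ together with the rescaled generator $g^{(t)}_s(z)=(T-t)g_{t+s(T-t)}(z/\sqrt{T-t})$, so that $Y_t(\omega)=\rho^{g^{(t)}}(F(\omega\oplus_t\cdot))$ for $P$-a.e.\ $\omega$, where $\rho^{g^{(t)}}$ is the time-zero value functional; the $1$-Lipschitz bound \eqref{eq:bsde.operator.lipschitz} for $\rho^{g^{(t)}}$ comes from the dual representation \eqref{eq:dual rep Yn} of Delbaen--Hu--Bao (a representation under measures $Q\ll P$ with stochastic-exponential densities, not a shift of paths under $P$; passing between the two requires care with the adaptedness of the controls), and the identification of $Y_t(\omega)$ with $\rho^{g^{(t)}}(F(\omega\oplus_t\cdot))$ rests on the time-consistency/dynamic-programming results cited from \cite{BDH10,tarpodual,boue-dupuis-ramon}. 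Without some version of this construction your ``coupling the Brownian increments after $t$'' step is not a proof. Second, the case where $F$ is only bounded from below is not addressed at all: the paper first proves the result for bounded $F$, then truncates to $F\wedge n$, invokes the stability result of Briand--Hu for quadratic BSDEs with terminal conditions having exponential moments to get $Y^n\to Y$, and uses that $L_F$-Lipschitz continuity is preserved under pointwise limits. You should supply both of these steps (or precise citations replacing them) for the proof to be complete.
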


\begin{remark}
	When $d=1$, $g = 0$ and $F=\mathrm{id}$, then $Y$ is the Brownian motion, and $C_y=2$ (which is known to be optimal for Brownian motion) showing that the constant $C_y$ in Theorem \ref{thm:bsde.multi.dim} and Theorem \ref{thm:bsde.1.dim} cannot be improved in general.
\end{remark}

\begin{example}
	By Theorem \ref{thm:bsde.1.dim}, the law of the process $Y_t:=\log E[\exp(F)|\mathcal{F}_t]$ satisfies $T(2L_F^2)$ for every $L_F$-Lipschitz continuous function $F$ on $\Omega$  which is bounded from below.
	In fact, it follows from martingale representation and It\^o's formula that there is a progressive process $Z$ such that $(Y,Z)$ solve equation \eqref{eq:bsde} with $g(z)= \frac12|z|^2$ and terminal condition $F$.
\end{example}

BSDEs provide a powerful probabilistic tool to tackle second order nonlinear partial differential equations as first noted by \cite{Pardoux-Peng92}.
They can be seen as a nonlinear generalization of the maximum principle in stochastic control theory.
Moreover, BSDEs have various applications in quantitative finance.
The following is a prime example stated in more generality (and precision) in Section \ref{sec:portfolio}.

\begin{example}[Utility maximization]
\label{exa:finance}
	Let $F\colon \Omega\to\mathbb{R}$ be bounded and $L_F$-Lipschitz continuous, and consider the Black-Scholes dynamics $dS_t=S_t(dt + \,dW_t)$ for stock price.
	The process 
	\[ V_t:=\mathop{\mathrm{ess\,sup}}_{p} E\Big[ U\Big( F- \int_t^T p_u \,\frac{dS_u}{S_u} \Big)\Big|\mathcal{F}_t\Big]
	\qquad\text{where } U(x)=-\exp(-x), \]
	where the supremum is taken over predictable portfolios $p$ subject to some integrability condition, defines the value process of the exponential utility maximization problem in the Black-Scholes market with random endowment $F$.
	
	Then (a suitable transformation of) $V$ and the optimal trading strategy $\pi^\ast$ are characterized by a BSDE and we will see that both satisfy the $T_2$-inequality.
	In particular, concentration of empirical measure as in Corollary \ref{cor:wasserstein.conc} or Gaussian concentration hold, showing for example that the value of the optimal utility and portfolio are concentrated around their mean. 
\end{example}

\subsubsection{Forward diffusions} \hfill\\
Let us finally consider the stochastic differential equation
\begin{equation}
\tag{SDE}
\label{eq:SDE intro}
	X_t = x + \int_0^tb_u(X_u)\,du + \int_0^t\sigma_u(X_u)\,dW_u
	\quad\text{for } t\in[0,T]
\end{equation} 
in dimension $1$ (again, the Brownian motion is $d$-dimensional).

\begin{theorem}[$T_2$ for 1-dim SDE]
\label{thm:SDE}
	Assume that
	\begin{enumerate}[(A)]
	\item
	\label{ass:sde.b}
	$b\colon[0,T]\times \mathbb{R}\to \mathbb{R}$ is Borel measurable and continuously differentiable in the first variable,
	\item
	\label{ass:sde.sigma}
	$\sigma\colon[0,T]\times\mathbb{R}\to \mathbb{R}^d$ is bounded, continuously differentiable in the first variable and $L_\sigma$-Lipschitz continuous in the second variable,
	 and $\sigma\sigma'\ge c>0$ for some constant $c$,
	\item the quantities
	\label{ass:sde.b.divided.sigma}
	$c_1:=\sup_{t\in [0,T]}\|\frac{b_t}{\sigma_t\sigma_t'}(\cdot)\|_{L^1(dx)}$; $c_2:=\sup_{t\in [0,T]}\| \frac{b_t}{\sigma_t\sigma_t'}(\cdot)\|_{L^\infty(dx)}$; $c_3:=\sup_{t\in [0,T]} \|\frac{\partial}{\partial t}\frac{b_t}{\sigma_t\sigma_t'}(\cdot)\|_{L^1(dx)}$ and $c_4:=\|\sup_{t\in [0,T]} \frac{\partial}{\partial t}\frac{b_t}{\sigma_t\sigma_t'}(\cdot)\|_{L^1(dx)}$ are finite.
	\end{enumerate}
	Then, \eqref{eq:SDE intro} admits a unique strong solution and
	\[ \text{the law }\mu^x \text{ of } X \text{ satisfies } T_2(C_x) \]
	with $C_x:=6\exp(c_1+15 \max( c_3\exp(2c_1), \|\sigma\|_\infty c_2\exp(2c_1) + \exp(2c_1)L_\sigma^2))$.
\end{theorem}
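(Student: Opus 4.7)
The plan is to reduce \eqref{eq:SDE intro} to an SDE with bounded Lipschitz coefficients via a bi-Lipschitz change of space variable of Zvonkin type, apply the classical $T_2$ inequality for SDEs with Lipschitz coefficients (in the spirit of Djellout--Guillin--Wu) to the transformed process, and transfer the inequality back. The natural transformation is
\[ \phi(t,x) := \int_0^x e^{-2A_t(y)}\,dy \quad \text{with} \quad A_t(y) := \int_0^y \frac{b_t(z)}{\sigma_t(z)\sigma_t'(z)}\,dz. \]
Assumption (c) forces $|A_t|\leq c_1$ uniformly, so $e^{-2c_1}\leq\partial_x\phi\leq e^{2c_1}$, and $\phi(t,\cdot)$ is a bi-Lipschitz bijection of $\mathbb{R}$ with constant $e^{2c_1}$, uniformly in $t\in[0,T]$. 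The induced path-space map $\omega\mapsto(t\mapsto\phi(t,\omega(t)))$ is therefore bi-Lipschitz on $(C([0,T]),\|\cdot\|_\infty)$ with the same constant, so by the contraction principle for $T_2$ any $T_2(C)$ for the transformed process pulls back to $T_2(e^{4c_1}C)$ for $\mu^x$.

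Next, I would show that $Y_t:=\phi(t,X_t)$ satisfies an SDE
\[ dY_t=\tilde b_t(Y_t)\,dt+\tilde\sigma_t(Y_t)\,dW_t \]
with $\tilde b_t(y):=\partial_t\phi(t,\phi^{-1}(t,y))$ and $\tilde\sigma_t(y):=\partial_x\phi(t,\phi^{-1}(t,y))\sigma_t(\phi^{-1}(t,y))$. The point of the choice of $\phi$ is the pointwise identity $b\,\partial_x\phi+\tfrac12(\sigma\sigma')\partial_{xx}\phi\equiv 0$, which is precisely what kills the drift of $X$ when one applies It\^o's formula to $\phi(t,X_t)$. Because $b$ is only Borel measurable in $x$, the derivative $\partial_{xx}\phi$ exists only almost everywhere, so I would justify this It\^o expansion via Krylov's extension of It\^o's formula to solutions of uniformly elliptic SDEs (available here thanks to $\sigma\sigma'\geq c>0$), or equivalently by mollifying $b$ in the space variable and passing to the limit; the constant $c_4$, which does not appear explicitly in $C_x$, serves precisely to legitimise the Fubini-type interchange $\partial_t\int_0^{\cdot}=\int_0^{\cdot}\partial_t$ in the computation of $\partial_t\phi$.

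Once the reduced SDE is in hand, assumption (c) provides the uniform bounds $|\partial_x\phi|\leq e^{2c_1}$, $|\partial_{xx}\phi|\leq 2c_2 e^{2c_1}$ and $|\partial_x\partial_t\phi|=2|e^{-2A_t}\partial_t A_t|\leq 2c_3 e^{2c_1}$; combined with the boundedness and $L_\sigma$-Lipschitz continuity of $\sigma$, these translate into $\|\tilde\sigma\|_\infty\leq \|\sigma\|_\infty e^{2c_1}$ together with Lipschitz constants for $\tilde\sigma$ and $\tilde b$ of the order $(\|\sigma\|_\infty c_2+L_\sigma)e^{4c_1}$ and $c_3 e^{4c_1}$ respectively. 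Invoking the standard $T_2$-inequality for SDEs with bounded Lipschitz coefficients then yields a constant $C_y$ for $\mu^y$ of the form $2T\|\tilde\sigma\|_\infty^2\exp(cT(L_{\tilde b}+L_{\tilde\sigma}^2))$, and the bi-Lipschitz transfer from the first paragraph produces $T_2(e^{4c_1}C_y)$ for $\mu^x$; simplifying the resulting exponent should reproduce the displayed $C_x=6\exp(c_1+15\max(\cdot,\cdot))$.

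The main obstacle is the rigorous It\^o expansion in the second paragraph: since $b$ is merely measurable in $x$, the map $\phi$ is only $C^1$ (and not $C^2$) in the space variable, and one has to appeal either to a generalised It\^o/chain rule for strong solutions of non-degenerate SDEs with irregular drift, or to a stability argument under mollification of $b$. After that, the remaining work is essentially chain-rule estimates and careful bookkeeping to compose the Lipschitz-SDE $T_2$ exponent with the $e^{4c_1}$ transfer factor and arrive at the precise constant stated.
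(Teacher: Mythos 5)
Your proposal follows essentially the same route as the paper: the drift-removing Zvonkin transformation $F_t(x)=\int_0^x\exp(-2\int \tfrac{b_t}{\sigma_t\sigma_t'})$, It\^o--Krylov to justify the chain rule for the merely measurable drift, Lipschitz estimates on the transformed coefficients via $c_1,c_2,c_3$, an off-the-shelf $T_2$ result for the Lipschitz SDE, and transfer back through the Lipschitz inverse. The only substantive remark is that the sup-norm $T_2$ you need for the transformed equation is \"Ust\"unel's theorem (whose constant $6\exp(15\max(\cdot,\cdot))$ is exactly what appears in $C_x$) rather than Djellout--Guillin--Wu, whose quadratic inequality for non-constant diffusion coefficients is stated for the $L^2$ path metric.
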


\begin{example}
	The above result applies for instance to Langenvin's equations:
	These are stochastic differential equations of the form
\begin{equation}
\label{eq:langenvin}
	dX_t = -U'(X_t)\,dt + \sqrt{2/\lambda}\,dW_t,\quad X_0 = x
\end{equation}
where $U:\mathbb{R}\to \mathbb{R}$ is differentiable and plays the role of a potential, and $\lambda>0$ (we take for simplicity $d=1$).
We will discuss this example further in Section \ref{sec:sde}.
\end{example}

\subsubsection{Variations and extensions} \hfill\\
The arguments leading to our main results for backwards diffusions and the optimal stopping problem (inspired by techniques used in `pathwise control theory' see e.g.\ \cite{Ekr-Kel-Tou-Zha14,Nutz-Zhang15}) consist in showing that the processes under consideration are Lipschitz transforms of Brownian motion.
It is likely that this technique also applies to more general stochastic optimal control problems and that different consequences than the above can be deduced.

To illustrate our point, we derive the logarithmic-Sobolev inequality, which reads as:
\begin{equation*}
	\mu\in\mathcal{P}(\mathbb{R}^m)\text{ satisfies } LSI(C)
	\quad\text{if}\quad
	\mathrm{Ent}_\mu(f) \leq C\int_{\mathbb{R}^m}|\nabla f|^2\,d\mu
\end{equation*}
for every $\mu$-integrable and differentiable function $f\colon\mathbb{R}^m\to \mathbb{R}$.
Here $\mathrm{Ent}_\mu(f):=\int f^2 \log(f^2 / \int f^2\,d\mu) \,d\mu$ is the entropy of $f$ w.r.t.\ $\mu$ with the convention $0/0=0$.

\begin{theorem}[Log-Sobolev]
\label{thm:lsi y}
	In the setting of either Theorem \ref{thm:bsde.multi.dim} or Theorem \ref{thm:bsde.1.dim}, for every $t\in[0,T]$ one has that
	\[  \text{the law $\mu^y_t$ of } Y_t \text{ satisfies } LSI(T C_y)\]
	with the constant $C_y$ given in the respective theorems.
	
	The same holds true in the setting of Theorem \ref{thm:stopping} (if $Y$ above is replaced by $S$).
\end{theorem}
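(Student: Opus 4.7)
The plan is to combine the sup-norm Lipschitz representation of $Y$ (respectively $S$) as a functional of $W$ --- which is at the heart of the proofs of Theorems~\ref{thm:stopping}, \ref{thm:bsde.multi.dim} and \ref{thm:bsde.1.dim} --- with the classical Gross logarithmic-Sobolev inequality on Wiener space: $\mathrm{Ent}_{P}(G^{2})\le 2\,E[\|DG\|_{H}^{2}]$ for every $G$ in the Malliavin Sobolev space $\mathbb{D}^{1,2}$, where $D$ denotes the Malliavin derivative and $H$ the Cameron--Martin subspace of $\Omega$.

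First I recall from the proofs of the cited theorems that there is a map $\Psi\colon\Omega\to C([0,T],\mathbb{R}^{m})$, Lipschitz with constant $L:=\sqrt{C_{y}/2}$ (respectively $\sqrt{C_{s}/2}$) with respect to the sup-norms on both sides, and such that $Y=\Psi(W)$ (respectively $S=\Psi(W)$); indeed the whole point of the $T_{2}$ arguments was to exhibit such a Lipschitz transform of $W$. In particular, for every fixed $t\in[0,T]$ the evaluation $Y_{t}\colon\Omega\to\mathbb{R}^{m}$ is $L$-Lipschitz in sup-norm. By Cauchy--Schwarz, $\|h\|_{\infty}\le\sqrt{T}\,\|h\|_{H}$ for every $h\in H$, so the shifted map $h\mapsto Y_{t}(\omega+h)$ is $L\sqrt{T}$-Lipschitz from $(H,\|\cdot\|_{H})$ to $\mathbb{R}^{m}$ for $P$-a.e.\ $\omega$. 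The Bouleau--Hirsch / Kusuoka criterion then places each component of $Y_{t}$ in $\mathbb{D}^{1,2}$ and yields the operator-norm bound $\|DY_{t}(\omega)\|_{H\to\mathbb{R}^{m}}\le L\sqrt{T}$ almost surely.

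Combining this with the Malliavin chain rule $D(f\circ Y_{t}) = (DY_{t})^{\ast}\nabla f(Y_{t})$ for $f\in C^{1}_{b}(\mathbb{R}^{m})$ gives $\|D(f\circ Y_{t})\|_{H}\le L\sqrt{T}\,|\nabla f(Y_{t})|$ pointwise, and Gross's inequality then yields
\[
\mathrm{Ent}_{\mu^{y}_{t}}(f)=\mathrm{Ent}_{P}(f\circ Y_{t})\le 2L^{2}T\,E[|\nabla f(Y_{t})|^{2}]=T C_{y}\int_{\mathbb{R}^{m}}|\nabla f|^{2}\,d\mu^{y}_{t},
\]
which is precisely $LSI(TC_{y})$; the extension from $C^{1}_{b}$ to all differentiable $\mu^{y}_{t}$-integrable $f$ is by standard approximation, and the same argument applied to $S$ in place of $Y$ covers the optimal stopping case. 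The only real work lies in the first step, namely producing the sup-norm Lipschitz representation of $Y_{t}$ with the correct constant $L$; this, however, is essentially what was already achieved in the proofs of the underlying $T_{2}$-inequalities, so no new pathwise machinery is needed here beyond an appeal to the clean Lipschitz dependence of $\Psi$ on $W$ together with the Sobolev embedding $H\hookrightarrow\Omega$.
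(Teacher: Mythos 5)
Your proposal is correct and follows essentially the same route as the paper: the paper isolates your steps (Sobolev embedding $\|h\|_\infty\le\sqrt{T}\,\|h\|_H$, Malliavin differentiability of sup-norm Lipschitz functionals with the resulting bound on the $H$-gradient, chain rule, and Gross's $LSI(2)$ for Wiener measure) into a separate stability lemma for Lipschitz push-forwards (Lemma \ref{lem:stabil LSI}), and then applies it to the $L_Y$-Lipschitz map $Y_t$ with $L_Y=\sqrt{C_y/2}$ exactly as you do. The only cosmetic difference is the reference used for Malliavin differentiability of Lipschitz functionals (the paper cites \cite[Proposition 3.2]{Che-Nam} rather than Bouleau--Hirsch/Kusuoka), which does not change the argument.
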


\subsection{Related literature}
Measure concentration is a popular area of modern probability theory.
This is mostly due to its variety of applications, including (and certainly not restricted to) model selection, random algorithms, quantitative finance and statistics \cite{Massart,Bou-Gab-Mas,Bou-Tho,Lac-concent,ConcenRM,Dubh-Panc,Pal-Shkol14}.
It is the works of Marton \cite{Marton96} and Talagrand \cite{Talagrand96} that first underlined the relevance of transportation inequalities in the description of the concentration of measure phenomenon.
Transportation inequalities are also related to Poincar\'e inequality, log-Sobolev inequality and hypercontractivity, see \cite{Otto-Vil00,Bob-Gen-Led01}.

Talagrand proved the validity of $T_2$ for the multidimensional Gaussian distribution with optimal constant $C = 2$.
His work was then extended to Wiener measure on the path space in \cite{Fey-Ust04}.
Transportations inequalities for laws of (forward) SDEs have been extensively studied.
For the case of equation driven by Brownian motion, see \cite{Dje-Gui-Wu,Uestuenel12,Pal12} and for SDEs driven by fractional Brownian motions or Gaussian processes refer to \cite{Sau,Riedel}.
All the aforementioned works on SDEs assume that the coefficients are Lipschitz-continuous or satisfy a dissipative condition.
Note however the exception of \cite{Abakirova} who derives versions of the Poincar\'e and log-Sobolev inequalities of the so-called skew Brownian motions, which can be seen as solutions of SDEs with local time of the unknown.

Regarding Transportations inequalities for backward SDEs, to the best of the authors' knowledge the only work on the subject is the paper \cite{Bah-Bou-Mou19} available online since August 2019.
It uses the Girsanov transform technique of \cite{Dje-Gui-Wu} to derive quadratic transportation inequalities for laws of one-dimensional BSDEs with bounded coefficients, and Lipschitz  continuous generators.

\subsection{Organization of the paper}
We organize the rest of the paper as follows: The next section is dedicated to the proofs of Theorems \ref{thm:stopping} and \ref{thm:bsde.multi.dim}, we also discuss various extensions and applications, including functional inequalities for laws of martingales.
In Section \ref{sec:ineq bsde}, we present the proof of Theorem \ref{thm:bsde.1.dim}.
The application to portfolio optimization alluded in the introduction is presented in more details.
We prove Talagrand inequality for SDEs with measurable drifts in Section \ref{sec:sde} and conclude with the analysis of the logarithmic-Sobolev inequality.

\section{The proofs of Theorem \ref{thm:bsde.multi.dim} and Theorem \ref{thm:stopping}}

The strategy behind the proofs is to show that the objects of interest are in fact obtained through Lipschitz transformations of Brownian motion.
The latter is known to satisfy the $T_2$-inequality, see \cite[Theorem 3.1]{Fey-Ust04}.
For this reason, the following lemma on the stability of transportation inequalities under push-forward by Lipschitz maps (taken from \cite{Dje-Gui-Wu}) is fundamental and stated separately.

\begin{lemma}[\text{\cite[Lemma 2.1]{Dje-Gui-Wu}}]
\label{lem:stab push-forw}
	Assume that $\mu$ satisfies $T_2(C)$ and let $\psi\colon \Omega\to\Omega$ be $\mu$-almost surely $L_\psi$-Lipschitz.
	Then the push-forward $\psi_\ast\mu$ satisfies $T_2(CL_\psi^2)$.
\end{lemma}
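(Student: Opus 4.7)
The plan is to lift any candidate measure $\nu'$ for $\psi_\ast\mu$ back to a measure $\nu$ on $\Omega$ with identical relative entropy, apply the hypothesis $T_2(C)$ on the source side, and push the resulting coupling forward through $\psi$, using the Lipschitz bound to transfer the transport cost.

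Concretely, fix $\nu'\in\mathcal{P}(\Omega)$; we may assume $H(\nu'|\psi_\ast\mu)<\infty$, otherwise there is nothing to show. Set $f:=d\nu'/d(\psi_\ast\mu)$ and define $\nu$ by $d\nu/d\mu:=f\circ\psi$. A change of variables verifies that $\nu$ is a probability measure, that $\psi_\ast\nu=\nu'$, and that
\[
H(\nu|\mu) = \int (f\circ\psi)\log(f\circ\psi)\,d\mu = \int f\log f\,d(\psi_\ast\mu) = H(\nu'|\psi_\ast\mu).
\]

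Next, apply $T_2(C)$ to $\mu$ and $\nu$: for every $\varepsilon>0$ pick a coupling $\pi$ of $(\mu,\nu)$ with $\int\|\omega-\eta\|_\infty^2\,d\pi \leq C\,H(\nu|\mu)+\varepsilon$. Let $A$ denote a full $\mu$-measure set on which $\psi$ is $L_\psi$-Lipschitz; since $\nu\ll\mu$ one also has $\nu(A)=1$, hence $\pi(A\times A)=1$. The push-forward $\tilde\pi:=(\psi,\psi)_\ast\pi$ is a coupling of $\psi_\ast\mu$ and $\nu'$, and using the pointwise Lipschitz estimate valid on $A\times A$ we obtain
\[
\mathcal{W}_2(\psi_\ast\mu,\nu')^2 \;\leq\; \int\|\psi(\omega)-\psi(\eta)\|_\infty^2\,d\pi \;\leq\; L_\psi^2 \int\|\omega-\eta\|_\infty^2\,d\pi \;\leq\; L_\psi^2\bigl(C\,H(\nu'|\psi_\ast\mu)+\varepsilon\bigr).
\]
Letting $\varepsilon\to 0$ gives $T_2(CL_\psi^2)$.

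The only subtle point is the mismatch between the pointwise Lipschitz bound needed in the cost computation and the merely almost sure validity of the hypothesis; but this is immediately resolved by the absolute continuity $\nu\ll\mu$, which forces the exceptional set to be negligible under both marginals of $\pi$. The rest is a routine transport of mass, and no further regularity of $\psi$ is required.
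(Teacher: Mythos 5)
Your proof is correct. Note that the paper does not actually prove this lemma---it is imported verbatim from \cite[Lemma 2.1]{Dje-Gui-Wu}---and your argument (lift $\nu'$ to $\nu$ with $d\nu/d\mu = f\circ\psi$, observe that the relative entropy is preserved, apply $T_2(C)$ to $(\mu,\nu)$, and push the near-optimal coupling forward through $(\psi,\psi)$, handling the exceptional set via $\nu\ll\mu$) is precisely the standard proof given in that reference, so there is nothing to compare against within the paper itself.
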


The proofs need some notational preparation, introduced below.
For $t\in [0,T]$ denote by 
\[\Omega^t:= \{\gamma \in C([t,T],\mathbb{R}^d) : \gamma(t)=0\}\] 
the shifted canonical space, by $W^t$ the canonical process on $\Omega^t$, by $P^t$ the Wiener measure on $\Omega^t$, and by $(\mathcal{F}^t_s)_{s\in [t,T]}$ the $P^t$-completion of the natural filtration of $W^t$. 
For $\omega\in \Omega$, $t\in[0,T]$, and $\gamma\in\Omega^t$ define the concatenation $\omega\otimes_t\gamma\in\Omega$ via
\begin{equation*}
	(\omega\otimes_t\gamma)(s) :=
	\begin{cases}
	\omega(s) &\text{if } s\in[0,t),\\
	\gamma(s) + \omega(t) &\text{if } s\in[t,T].
	\end{cases}
\end{equation*}
Further, for a function $X\colon\Omega\times[0,T]\to\mathbb{R}$ and fixed $(t,\omega)\in[0,T]\times \Omega$, define its shifts by 
\[ X^{t,\omega}\colon \Omega^t\times [t,T]\to\mathbb{R},\quad X^{t,\omega}_s(\gamma) := X_s(\omega\otimes_t\gamma). \]
Similar notation is applied to a function $X\colon \Omega\to\mathbb{R}$ or a function $g\colon \Omega\times[0,T]\times A\to\mathbb{R}$, where $A$ is an arbitrary space, that is, $X^{t,\omega}(\gamma)=X(\omega\otimes_t\gamma)$ or $g^{t,\omega}_s(\gamma,a):=g_s(\omega\otimes_t\gamma,a)$.

Note that, using the above notation, one has 
\[ E[X|\mathcal{F}_t](\omega)
=\int_{\Omega^t} X^{t,\omega}(\gamma)\,P^t(d\gamma)
=:E_{P^t(d\gamma)}[X^{t,\omega}(\gamma)] 
=:E_{P^t}[X^{t,\omega}]\]
for $P$-almost all $\omega\in\Omega$.

\subsection{Proof of Theorem \ref{thm:bsde.multi.dim} and first consequences}
\begin{proof}[Proof of Theorem \ref{thm:bsde.multi.dim}]
	It follows from the work of Pardoux $\&$ Peng \cite[Theorem 3.1]{PP90} that the equation \eqref{eq:bsde} admits a unique solution $(Y,Z)$ such that $Z$ is square integrable and $Y$ has ($P$-almost surely) continuous paths, that is, $Y(\omega)\in C([0,T],\mathbb{R}^m)$ for ($P$-almost every) $\omega\in\Omega$.
	Using arguments close in spirit to \cite{Ekr-Kel-Tou-Zha14}, we will show that the function $\omega\mapsto Y(\omega)$ is Lipschitz continuous.
	
	Let $t\in[0,T]$.
	By Lemma \ref{lem:shifed.bsde} below there is a $P$-zero set $N\subset\Omega$ such that for $\omega\in N^c$ one has $Y_t(\omega)=Y_t^{t,\omega}$ $P^t$-almost surely and the pair $(Y^{t,\omega}_r, Z^{t,\omega}_r)_{r\in [t,T]}$ satisfies
	\begin{equation}
		Y^{t, \omega}_r = F^{t, \omega} + \int_r^Tg^{t,\omega}_u(W^t, Y^{t, \omega}_u, Z^{t, \omega}_u)\,du - \int_r^TZ^{t, \omega}_u\,dW^t_u , \quad P^t\text{-a.s.\ $r\in [t,T]$}.
	\end{equation}
	From now on fix $\omega, \eta \in N^c$ and $t\in [0,T]$.
	For $r\in[t,T]$ define
	\begin{align*}
		\delta Y_r&:= Y^{t, \omega}_r - Y^{t, \eta}_r,\\
		\delta Z_r&:= Z^{t,\omega}_r - Z^{t, \eta}_r,	\\
		\delta g_r&:= g^{t, \omega}_r(W^t, Y^{t, \omega}_r, Z^{t, \omega}_r) - g^{t, \eta}_r(W^t, Y^{t, \omega}_r, Z^{t, \omega}_r).
	\end{align*}
	As $(y,z)\mapsto g_r(\omega,y,z)$ is Lipschitz and therefore Lebesgue almost surely differentiable, it follows that
	\begin{align*}
	&g^{t, \omega}_r(W^t, Y^{t, \omega}_r, Z^{t, \omega}_r) - g^{t, \eta}_r(W^t, Y^{t, \eta}_r, Z^{t, \eta}_r)  \\
	&= \delta g_r + \int_0^1 
	\begin{pmatrix} 
		\partial_y g_r^{t,\eta}(W^t, Y^{t, \omega}_r -a \delta Y_r , Z^{t, \omega}_r - a \delta Z_r) \\
		\partial_z g_r^{t,\eta}(W^t, Y^{t, \omega}_r -a \delta Y_r , Z^{t, \omega}_r - a \delta Z_r)
	\end{pmatrix}^\top 
	\begin{pmatrix} 
		\delta Y_r\\
		\delta Z_r
	\end{pmatrix} 
		\,da\\
	&:=\delta g_r + \beta_r\delta Y_r + q_r \delta Z_r. 
	\end{align*}
	Note that the progressive processes $\beta$ and $q$ are bounded by $L_g$.
	Moreover, the pair $(\delta Y, \delta Z)$ solves the linear equation 
	\begin{equation*}
		\delta Y_r = \delta Y_T + \int_r^T \delta g_u + \beta_u \delta Y_u + q_u \delta Z_u\,du - \int_r^T \delta Z_u\,dW_u^t, \quad P^t\text{-a.s.}
	\end{equation*}
	for $r\in [t,T]$, and a standard computation as in \cite[Theorem 1.1]{karoui01} reveals that 
	\begin{align*}
		\delta Y_r&= e^{\int_r^T\beta_u\,du}\delta Y_T + \int_r^Te^{\int_r^u\beta_s\,ds} \delta g_u \,du- \int_r^T e^{\int_r^u\beta_s\,ds} Z_u(dW^t_u - q_u\,du) \quad P^t\text{a.s.}
	\end{align*}
	For $r\in [t, T]$, define
	\begin{equation*}
		\Gamma_r:= \exp\Big(\int_t^r q_u\,dW_u - \frac 12 \int_t^r |q_u|^2\,du \Big) \exp\Big( \int_t^r \beta_u\,du\Big).
	\end{equation*}
	By Girsanov's theorem, taking the expectation with respect to the (shifted) measure $P^t$ yields
	\begin{equation}
	\label{eq:indentity with Gamma}
		|\delta Y_t| = \Big| E_{P^t}\Big[\Gamma_T\delta Y_T + \int_t^T\Gamma_r\delta g_r\,dr \Big] \Big|
	\end{equation}
	$P^t$-almost surely.
	Moreover, by Lipschitz continuity of $g$, it holds that
	\begin{align*}
		|\delta g_r(\gamma)| 
		&\leq L_g \|\omega\otimes_t \gamma- \eta\otimes_t \gamma \|_\infty\\
		&\leq L_g \|\omega-\eta\|_\infty
	\end{align*}
	for all $\gamma\in\Omega^t$.
	Thus, as $E_{P^t}[\Gamma_r]\leq \exp(T L_g)$ for every $r\in[t,T]$, it follows from \eqref{eq:indentity with Gamma} that
	\begin{align*}
	|\delta Y_t| 
	&\leq L_F \|\omega-\eta\|_\infty E_{P^t}[\Gamma_T]  + L_g \|\omega-\eta\|_\infty E_{P^t}\Big[\int_t^T \Gamma_r \,dr\Big]\\
	&\leq (L_F+TL_g)\exp(TL_g) \|\omega-\eta\|_\infty
	\end{align*}
	$P^t$-almost surely.
	
	As $\omega,\eta\in N^c$ and $t\in[0,T]$ were arbitrary and $P(N)=0$, this shows that 
	\[Y\colon\Omega\to C([0,T],\mathbb{R}^m) \quad\text{is } (L_F+TL_g)e^{TL_g}\text{-Lipschitz}.\]
	Now recall that by \cite[Theorem 3.1]{Fey-Ust04}, the probability measure $P$ (the law of the Wiener process) satisfies $T_2(2)$.
	Hence, the result follows by Lemma \ref{lem:stab push-forw}.
\end{proof}

\begin{lemma}	
\label{lem:shifed.bsde}
	For $P$-almost all $\omega\in\Omega$ it holds that
	\[Y^{t, \omega}_r = F^{t, \omega} + \int_r^Tg^{t,\omega}_u(W^t, Y^{t, \omega}_u, Z^{t, \omega}_u)\,du - \int_r^TZ^{t, \omega}_u\,dW^t_u , \quad P^t\text{-a.s.\ $r\in [t,T]$}\]
	and $Y_t(\omega)=Y_t^{t,\omega}$ $P^t$-almost surely.
\end{lemma}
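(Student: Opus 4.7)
The plan is to combine the regular-conditional-probability disintegration of Wiener measure along $\mathcal{F}_t$ with a pathwise shift of each term in the BSDE. Recall from the paragraph preceding the lemma that for $P$-almost every $\omega$ one has $E[X\mid\mathcal{F}_t](\omega)=E_{P^t}[X^{t,\omega}]$; equivalently, the regular conditional probability $P(\cdot\mid\mathcal{F}_t)(\omega)$ is the pushforward of $P^t$ under $\gamma\mapsto\omega\otimes_t\gamma$. This will transport each $P$-a.s.\ identity on $\Omega$ into a $P^t$-a.s.\ identity on $\Omega^t$, valid for $P$-a.e.\ $\omega$.

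First I would fix a countable dense $D\subset[t,T]$ containing $t$. For each $r\in D$ the defining BSDE identity holds $P$-a.s., so the disintegration yields a $P$-null set $N_r$ such that for every $\omega\notin N_r$,
\[Y_r^{t,\omega} = F^{t,\omega} + \Big(\int_r^T g_u(Y_u,Z_u)\,du\Big)^{t,\omega} - \Big(\int_r^T Z_u\,dW_u\Big)^{t,\omega} \quad P^t\text{-a.s.}\]
Setting $N=\bigcup_{r\in D}N_r$ (still $P$-null) and using path continuity of $Y$ and of the two integrals upgrades the identity to holding simultaneously for all $r\in[t,T]$ outside $N$. It then remains to check that the shift operation commutes with the two integrals.

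For the Lebesgue part, progressive measurability of $g,Y,Z$ makes $(u,\gamma)\mapsto g_u^{t,\omega}(\gamma,Y_u^{t,\omega}(\gamma),Z_u^{t,\omega}(\gamma))$ jointly measurable, and Fubini identifies the shifted integral with $\int_r^T g_u^{t,\omega}(W^t,Y_u^{t,\omega},Z_u^{t,\omega})\,du$. The main technical step is the analogous identity for the It\^o integral,
\[\Big(\int_r^T Z_u\,dW_u\Big)^{t,\omega} = \int_r^T Z_u^{t,\omega}\,dW_u^t \qquad P^t\text{-a.s.},\]
for $P$-a.e.\ $\omega$. I would prove this by approximation. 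For an elementary process $Z^n$ whose partition contains $t$ both sides collapse to the same finite sum $\sum_i(\xi_i^n)^{t,\omega}(W^t_{s_{i+1}}-W^t_{s_i})$ pointwise in $\gamma$, so the equality is trivial. For general progressive $Z\in L^2(P\otimes du)$, take such $Z^n\to Z$ in $L^2(P\otimes du)$; the disintegration identity $\int_\Omega E_{P^t}\bigl[\int_t^T|(Z_u)^{t,\omega}-(Z^n_u)^{t,\omega}|^2\,du\bigr]\,P(d\omega)=E\bigl[\int_t^T|Z_u-Z^n_u|^2\,du\bigr]$ combined with It\^o's isometry under $P$ and under $P^t$ yields, along a subsequence, $L^2(P^t)$-convergence of both sides of the shifted identity for $P$-a.e.\ $\omega$, and the pathwise equality for $Z^n$ passes to the limit.

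The second claim $Y_t(\omega)=Y_t^{t,\omega}$ $P^t$-a.s.\ is then immediate: $Y_t$ is $\mathcal{F}_t$-measurable, $\omega\otimes_t\gamma$ agrees with $\omega$ on $[0,t]$, and the $P$-completion null sets project onto $P^t$-null sets of $\gamma$ for $P$-a.e.\ $\omega$. I expect the stochastic-integral shift to be the one real obstacle; once it is in place, the rest is bookkeeping.
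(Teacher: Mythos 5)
Your proposal is correct and follows essentially the same route as the paper: the concatenation/disintegration identity for Wiener measure, a trivial pathwise check for simple integrands followed by $L^2(P\otimes du)$-approximation and It\^o isometry (under $P$ and $P^t$, along a subsequence) for the stochastic integral, and raw-filtration measurability of $Y_t$ for the final claim. The only cosmetic differences are that you make the ``all $r$ via a countable dense set plus continuity'' step explicit and phrase the last point via projection of completion null sets where the paper invokes Blumenthal's 0--1 law.
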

\begin{proof}
	Let $r\ge t$ be fixed and denote by $N_1^c$ the set of all $\omega\in\Omega$ such that \eqref{eq:bsde} holds true so that $P(N_1)=0$.
	Then, for every $\omega\in\Omega$ and $\gamma\in\Omega^t$ such that $\omega\otimes_t\gamma\in N^c_1$, unwrapping the definitions of $Y^{t,\omega}$, $Z^{t, \omega}$ and \eqref{eq:bsde} it holds that
	\begin{align}
	\label{eq:path.shift.bsde}
	Y^{t, \omega}_r(\gamma) 
	=F^{t, \omega}(\gamma) + \int_r^T g^{t,\omega}_u(\gamma, Y^{t, \omega}_u(\gamma), Z^{t, \omega}_u(\gamma))\,du - \Big( \int_r^T Z_u\,dW_u\Big)(\omega\otimes_t\gamma).
	\end{align}
	As the law of the concatenation $\Omega\times\Omega^t\ni(\omega,\eta)\mapsto\omega\otimes_t\eta\in\Omega$ under $P\otimes P^t$ equals $P$, one has 
	\begin{align*}
	P( N_2^c) 
	&=P(N_1^c)
	=1, \quad\text{where}\\
	N_2^c&:=\{\omega\in\Omega : P^t(\gamma\in\Omega^t : \omega\otimes_t \gamma\in N^c_1)=1\}.
	\end{align*}
	For every $\omega\in N^c_2$ we have that \eqref{eq:path.shift.bsde} holds for $P^t$-almost all $\gamma\in\Omega^t$.
	Thus we are left to show that for $P$-almost all $\omega\in\Omega$, one has that 
	\begin{align}
	\label{eq:int.path.change}
	\Big(\int_r^T Z_u\,dW_u\Big)(\omega\otimes_t\gamma)
	=\Big( \int_r^T Z^{t,\omega}_u\,dW^t_u\Big)(\gamma)
	\end{align}
	for $P^t$-almost all $\gamma\in\Omega^t$ and all $r\in[t,T]$.

	In case that $Z$ is a simple processes, the $dW$ and $dW^t$-integrals are just finite sums.
	Then, as $r\geq t$, only increments of $\gamma$ appear in either sums and it follows that \eqref{eq:int.path.change} holds true for all $\omega\in\Omega$ and $\gamma\in\Omega^t$.
	In the general case, approximate $Z$ in $L^2(P\otimes du)$ by simple integrands $Z^n$ which are progressive w.r.t.~the raw filtration (in particular, $Z^{n,t,\omega}$ is progressive w.r.t.~$\mathcal{F}^t$ and the same holds true for the limit).
	Using once more that the law of the concatenation under $P\otimes P^t$ equals $P$, one obtains that
	\begin{align*}
	&E_{P(d\omega)}\Big[  E_{P^t(d\gamma)}\Big[ \int_t^T |Z^{n,t,\omega}_u(\gamma)- Z^{t,\omega}_u(\gamma)|^2\,du\Big]\Big]\\
	&=E_{P(d\omega)}\Big[ \int_t^T |Z^n_u(\omega)- Z_u(\omega)|^2\,du\Big]
	\to 0.
	\end{align*} 
	After passing to a subsequence, one may assume that the inner expectation converges to 0 for $P$-almost all $\omega\in\Omega$, that is $Z^{n,t,\omega}\to Z^{t,\omega}$ in $L^2(P^t\otimes du)$.
	The triangle inequality, Ito's isometry, and the fact that the law of $\cdot\otimes_t\cdot$ under $P\otimes P^t$ equals $P$ then show that
	\[E_{P(d\omega)}\Big[E_{P^t(d\gamma)}\Big[\Big|\Big(\int_r^T Z_u\,dW_u\Big)(\omega\otimes_t\gamma)- \Big(\int_r^T Z^{t,\omega}_u\,dW^t_u\Big)(\gamma)\Big|^2\Big]\Big]=0,\]
	which implies \eqref{eq:int.path.change}.
	
	To complete the proof, we are left to prove that $Y_t(\omega)=Y_t^{t,\omega}$ $P^t$-almost surely.
	This is a consequence of Blumenthal's 0-1 law, i.e.~conditioning on the right-continuous filtration is up to $P$-zero sets the same as conditioning on the raw filtration.
	The raw filtration at time $t$ is generated by paths up to time $t$, hence $Y_t(\omega)=Y_t(\eta)$ for all $\omega,\eta\in\Omega$ such that $\omega=\eta$ on $[0,t]$.
\end{proof}

In the next corollary we show that in our Brownian filtration, laws of martingales satisfy transportation inequalities:
This corollary as well as the subsequent subsection use the notion of Malliavin derivative. 
We refer the reader to \cite{Nualart2006} for an introduction to this topic and the (little bit of) Malliavin calculus used in the article.

\begin{corollary}
\label{cor:ineq y Q-martingale}
	Let $q\colon [0,T]\times \Omega\to \mathbb{R}^{d\times m}$ be a bounded progressive process and let $M$ be an $m$-dimensional martingale under the probability measure $Q^q := \mathrm{Law}(W + \int q\,du)$.
	If $M_T$ and $q$ are both Lipschitz continuous in $\Omega$, then the law of $M$ satisfies $T_2(C)$ for some constant $C>0$ depending on $d$.
\end{corollary}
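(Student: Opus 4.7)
The goal is to exhibit $M$ as the $Y$-component of a BSDE of the form \eqref{eq:bsde} and then invoke Theorem~\ref{thm:bsde.multi.dim}. Since $q$ is bounded and progressive, Girsanov's theorem identifies $Q^q$ with the measure $\mathcal{E}(\int q\cdot dW)_T\cdot P$, under which $B^q_t:=W_t-\int_0^t q_u\,du$ is a Brownian motion generating the same filtration as $W$. Because $M_T$ is Lipschitz and the Girsanov density is exponentially integrable, $M$ is a $Q^q$-square-integrable martingale and the $Q^q$-martingale representation theorem supplies a progressive process $Z$ with $dM_t=Z_t\,dB^q_t$. Rewriting in terms of $W$,
\[ M_t = M_T + \int_t^T Z_u\, q_u\,du - \int_t^T Z_u\,dW_u\qquad P\text{-a.s.}, \]
so that $(M,Z)$ solves \eqref{eq:bsde} with terminal condition $F=M_T$ (Lipschitz by hypothesis) and driver $g_u(\omega,y,z) = z\, q_u(\omega)$.

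The obstacle is that while $g$ is $\|q\|_\infty$-Lipschitz in $z$, its Lipschitz constant in $\omega$ is $|z|L_q$, which is not uniform in $z$. The remedy is to establish an a priori bound $|Z_t(\omega)|\le K$ for a deterministic constant $K$. Under $Q^q$ the canonical process solves the SDE $W_t=B^q_t+\int_0^t q_u(W)\,du$, and since $q$ is bounded and $L_q$-Lipschitz, a Gronwall argument on the flow shows that $W$ is a Lipschitz functional of $B^q$. Consequently $M_T$, viewed as a functional of the $Q^q$-Brownian path $B^q$, is Lipschitz with constant depending only on $L_{M_T}$, $L_q$ and $T$. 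The Clark--Ocone formula under $Q^q$ then yields $Z_t = E_{Q^q}[D_t^{B^q} M_T\mid \mathcal{F}_t]$, and the Malliavin derivative $D^{B^q}_tM_T$ is bounded almost surely by the sup-norm Lipschitz constant of $M_T$ with respect to $B^q$. This delivers the desired pointwise bound $K$.

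With $K$ in hand, we replace the driver by a smooth truncation $\tilde g_u(\omega,y,z):=\chi_K(z)\, q_u(\omega)$ that agrees with $g$ on $\{|z|\le K\}$; the solution $(M,Z)$ is unchanged since the truncation is never activated, and $\tilde g$ is now jointly Lipschitz in $(\omega,y,z)$ with a constant controlled by $K L_q+\|q\|_\infty$. Applying Theorem~\ref{thm:bsde.multi.dim} to the BSDE with terminal $M_T$ and driver $\tilde g$ yields $T_2(C)$ for the law of $M$, with $C$ depending on $L_{M_T}$, $L_q$, $\|q\|_\infty$, $T$ and $d$. The principal technical hurdle is the Malliavin step: one must genuinely convert the sup-norm Lipschitz regularity of $M_T$ and $q$ on $\Omega$ into a pointwise $L^\infty$-bound on the Clark--Ocone integrand after the reparameterization $W\leftrightarrow B^q$. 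The remainder of the argument is essentially a direct invocation of Theorem~\ref{thm:bsde.multi.dim} combined with Lemma~\ref{lem:stab push-forw}.
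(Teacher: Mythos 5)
Your proposal is correct and follows the same architecture as the paper's proof: represent $M$ as the $Y$-component of \eqref{eq:bsde} with terminal condition $F=M_T$ and linear generator $g_t(\omega,y,z)=q_t(\omega)z$, observe that this generator is not uniformly Lipschitz in $\omega$, establish an a priori $L^\infty$-bound on $Z$, truncate the generator so that it becomes globally Lipschitz without altering the solution, and conclude by Theorem~\ref{thm:bsde.multi.dim}. The only genuine divergence is in how the bound on $Z$ is obtained. The paper simply invokes \cite[Theorem 2.2]{Che-Nam}, which gives boundedness of $Z$ for BSDEs whose terminal condition and generator have bounded Malliavin derivatives (the latter following from the Lipschitz hypotheses via \cite[Proposition 3.2]{Che-Nam}). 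You instead exploit the linearity of the equation directly: under $Q^q$ the process $B^q=W-\int q\,du$ is a Brownian motion generating the same filtration (by strong uniqueness of $W=B^q+\int q(W)\,du$), a Gronwall estimate shows $W$ is an $e^{TL_q}$-Lipschitz functional of $B^q$, hence $M_T$ is sup-norm Lipschitz in $B^q$, and the Clark--Ocone formula under $Q^q$ bounds $Z_t=E_{Q^q}[D_t^{B^q}M_T\mid\mathcal{F}_t]$ pointwise. This is a legitimate and essentially self-contained substitute for the citation, and it even yields an explicit constant $L_{M_T}e^{TL_q}$; its cost is that it is tied to the linear structure of the generator, whereas the Cheridito--Nam result the paper uses applies to general Lipschitz drivers. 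Both routes rest on the same underlying fact (sup-norm Lipschitz functionals have bounded Malliavin derivative), so I would classify your argument as a correct, slightly more explicit variant of the paper's proof rather than a different method.
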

Here `$q$ is Lipschitz continuous in $\Omega$' means that there is a constant $L_q$ such that $|q_t(\omega)-q_t(\eta)|\leq L_q \|\omega-\eta\|_\infty$ for all $\omega,\eta\in\Omega$ and $t\in[0,T]$.
\begin{proof}
	By Lipschitz continuity, $M_T$ is square integrable.
	Thus, it follows from martingale representation and Girsanov's theorem that the process $M$ satisfies
	\begin{equation}
	\label{eq:martingale bsde}
		M_t = M_T + \int_t^TZ_uq_u\,du - \int_t^TZ_u\,dW_u
	\end{equation}
	for some progressive, square integrable process $Z$.
	In particular, $(M,Z)$ satisfies the equation \eqref{eq:bsde} with generator $g$ defined by $g_t(\omega,y,z):=q_t(\omega)z$ and terminal condition $F:= M_T$.
	
	As $M_T$ is Lipschitz continuous and $q$ of bounded Malliavin derivative (this follows from the Lipschitz assumption on $q$, see e.g.\ \cite[Proposition 3.2]{Che-Nam}), we have by \cite[Theorem 2.2]{Che-Nam} that $Z$ must be bounded, say by $C$.
	Thus, $M$ also satisfies \eqref{eq:martingale bsde} with the now Lipschitz-continuous generator $\tilde g_t(z):= g_t(z)1_{\{|z|\le C\}} + g(z C/|z|)1_{\{|z|>C\}}$.
	The result then follows from Theorem \ref{thm:bsde.multi.dim}.
\end{proof}

\subsection{Proof of Theorem \ref{thm:stopping}}
We shift our focus on the proof of Theorem \ref{thm:stopping}.
Denote by $\mathcal{T}^t$ the set of all stopping times $\sigma\colon \Omega^t\to[t,T]$, that is, $\{\sigma\leq s\}\in \mathcal{F}^t_s$ for all $s\in[t,T]$.

\begin{lemma}
\label{lem:ess.sup.sup}
	It holds that
	\[ \mathop{\mathrm{ess.sup}}_{\tau\text{ is stopping time, }t\leq \tau\leq T} E[ \Gamma_\tau |\mathcal{F}_t](\omega)
	=\sup_{\sigma\in\mathcal{T}^t} E_{P^t}[\Gamma^{t,\omega}_{\sigma}] \]
	for $P$-almost all $\omega\in\Omega$.
\end{lemma}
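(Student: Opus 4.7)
The plan is to prove the two inequalities separately. Write $L(\omega)$ for the essential supremum on the left and $R(\omega)$ for the pointwise supremum on the right.

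For the inequality $L\le R$, I would exploit that the family $\{E[\Gamma_\tau|\mathcal{F}_t]:t\le\tau\le T\}$ is upward directed: for two stopping times $\tau_1,\tau_2\ge t$, the set $A:=\{E[\Gamma_{\tau_1}|\mathcal{F}_t]\ge E[\Gamma_{\tau_2}|\mathcal{F}_t]\}$ lies in $\mathcal{F}_t$, so $\tau_1\mathbf{1}_A+\tau_2\mathbf{1}_{A^c}$ is again a stopping time in $[t,T]$ and realises the pointwise maximum of the two conditional expectations. Consequently there exists a sequence $(\tau_n)$ of $[t,T]$-valued stopping times with $S_t=\sup_n E[\Gamma_{\tau_n}|\mathcal{F}_t]$ $P$-a.s. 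A pathwise shift argument analogous to (but simpler than) Lemma \ref{lem:shifed.bsde}, since no stochastic integral is involved, then shows that for each $n$ and for $P$-a.e.\ $\omega$ the map $\gamma\mapsto \tau_n(\omega\otimes_t\gamma)$ lies in $\mathcal{T}^t$ and
\[E[\Gamma_{\tau_n}|\mathcal{F}_t](\omega)=E_{P^t}[\Gamma^{t,\omega}_{\tau_n^{t,\omega}}]\le R(\omega).\]
Intersecting the countably many exceptional null sets yields $L\le R$ $P$-a.s.

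For the reverse inequality I would produce stopping times in $\mathcal{T}$ out of elements of $\mathcal{T}^t$ via the continuous shift $\psi\colon \Omega\to\Omega^t$ defined by $\psi(\omega')(s):=\omega'(s)-\omega'(t)$ for $s\in[t,T]$. Because $\psi$ is $(\mathcal{F}_s)$-$(\mathcal{F}^t_s)$ progressive, the pullback $\tau_\sigma:=\sigma\circ\psi$ of any $\sigma\in\mathcal{T}^t$ is a stopping time on $\Omega$ taking values in $[t,T]$. Since $\psi(\omega\otimes_t\gamma)=\gamma$, one checks that $\tau_\sigma^{t,\omega}=\sigma$ identically, and therefore by the same shift identity
\[L(\omega)\ge E[\Gamma_{\tau_\sigma}|\mathcal{F}_t](\omega)=E_{P^t}[\Gamma^{t,\omega}_\sigma]\quad\text{for $P$-a.e.\ $\omega$.}\]

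The main obstacle is that this last inequality holds only outside a null set which depends on the fixed $\sigma$, whereas $\mathcal{T}^t$ is uncountable. I would resolve it by exhibiting a countable subfamily $\{\sigma_k\}_{k\in\mathbb{N}}\subset\mathcal{T}^t$ such that $R(\omega)=\sup_k E_{P^t}[\Gamma^{t,\omega}_{\sigma_k}]$ for \emph{every} $\omega$. Since $\Gamma$ has continuous paths and is $L_\Gamma$-Lipschitz in $\omega$, the processes $\Gamma^{t,\omega}$ are uniformly equicontinuous in $\omega$, so it suffices to restrict to stopping times taking values in the dyadic grid $\{t+k 2^{-n}(T-t):0\le k\le 2^n\}$ defined through finitely many rational-level thresholds of a countable dense family of continuous $\mathcal{F}^t$-adapted path functionals. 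Combining this countable approximation with the previous step gives $L\ge R$ $P$-a.s., completing the proof.
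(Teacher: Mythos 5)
Your proof of the inequality $L\le R$ is essentially the paper's: both extract a countable sequence $(\tau_n)$ realizing the essential supremum (the paper invokes the definition of the essential supremum directly, you additionally note upward directedness, which is fine), shift each $\tau_n$ to an element of $\mathcal{T}^t$, and take a countable supremum; like the paper, you should first replace general stopping times by raw-filtration ones so that the sections $\tau_n^{t,\omega}$ are honestly $\mathcal{F}^{t}$-stopping times, but this is routine. For the reverse inequality your route is genuinely different. The paper approximates the reward side: it writes $\omega\mapsto\Gamma^{t,\omega}$ (after a tightness/compactness reduction) as a uniform limit of simple functions $\sum_n f^n 1_{A^n}(\omega)$ over a countable $\mathcal{F}_t$-partition, picks an $\varepsilon$-optimal $\sigma^n\in\mathcal{T}^t$ on each cell, and pastes them into a single stopping time on $\Omega$ whose shifts are simultaneously $\varepsilon$-optimal for a.e.\ $\omega$. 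You instead fix the stopping-time side: you pull back each $\sigma\in\mathcal{T}^t$ to $\tau_\sigma=\sigma\circ\psi$, observe $\tau_\sigma^{t,\omega}=\sigma$ exactly, and then defeat the $\sigma$-dependent null sets by reducing $\mathcal{T}^t$ to a countable subfamily computing $R(\omega)$ for every $\omega$. Both arguments are correct; the paper's buys a single near-optimal pasted stopping time (in the spirit of the dynamic-programming literature it cites) at the cost of the simple-function approximation of $\Gamma^{t,\cdot}$, while yours keeps the stopping times untouched at the cost of a separability argument for $\mathcal{T}^t$.

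One caveat on where your work is concentrated: the dyadic time-discretization plus ``uniform equicontinuity in $\omega$'' is not by itself what yields countability, since the family of stopping times with values in a fixed finite grid is still uncountable (one per nested sequence of events $\{\sigma\le s\}\in\mathcal{F}^t_s$). The essential step is the one you only gesture at: approximate each event $\{\sigma\le s\}$ in $P^t$-measure by members of a countable algebra generating $\mathcal{F}^{t,0}_s$ mod null sets, rebuild a stopping time $\sigma'$ from the approximating sets, and control $|E_{P^t}[\Gamma^{t,\omega}_{\sigma}]-E_{P^t}[\Gamma^{t,\omega}_{\sigma'}]|$ via Cauchy--Schwarz against $P^t(\sigma\ne\sigma')^{1/2}$, using that the Lipschitz hypothesis gives the square-integrable envelope $\sup_s|\Gamma^{t,\omega}_s|\le C+L_\Gamma(\|\omega\|_\infty+\|\gamma\|_\infty)$. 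This bound is only locally uniform in $\omega$, but that suffices since you need the identity $R(\omega)=\sup_k E_{P^t}[\Gamma^{t,\omega}_{\sigma_k}]$ for each fixed $\omega$ separately. With that step written out, your proof is complete.
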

\begin{proof}
	In a first step, note that one may restrict everywhere to stopping times w.r.t.\ the raw filtration.
	Indeed, for a general stopping time $\tau$ there is a stopping time $\tau'$ w.r.t.\ the right-continuous version of the raw filtration such that $\tau=\tau'$ almost surely.
	Further, integrability and pathwise continuity of $\Gamma$ guarantee that $E[\Gamma_{\min(\tau'+\varepsilon,T)}]\to E[\Gamma_{\tau'}]$ as $\varepsilon\to 0$.
	It remains to notice that $\min(\tau'+\varepsilon,T)$ is a stopping time w.r.t.\ the raw filtration for every $\varepsilon>0$.
	The same arguments apply to conditional expectations.

	We start by showing that the left hand side is smaller than the right hand side.
	To that end, by definition of the essential supremum, there exists a sequence $(\tau_n)$ of stopping times with values in $[t,T]$ such that, $P$-almost surely, the left hand side equals $\sup_n E[\Gamma_{\tau_n}|\mathcal{F}_t]$.
	For every $n$ and $\omega\in\Omega$ one has that $\tau_n^{t,\omega}\in\mathcal{T}^t$, which shows that
	\begin{align*}
	E[\Gamma_{\tau_n}|\mathcal{F}_t](\omega)
	&= E_{P^t(d\gamma)}[\Gamma^{t,\omega}_{\tau_n^{t,\omega}(\gamma)}(\gamma)]\\ 
	&\leq \sup_{\sigma\in\mathcal{T}^t} E_{P^t(d\gamma)}[\Gamma^{t,\omega}_{\sigma(\gamma)}(\gamma)]
	\end{align*}
	for $P$-almost all $\omega$.
	Taking the countable supremum thus yields the first claim.
	
	As for the reverse inequality, assume first that 
	\[ \Gamma^{\omega,t}=\sum_n f^n 1_{A^n}(\omega) \quad\text{for every } \omega\in\Omega,\]
	where $(A^n)$ is a $\mathcal{F}_t^0$-measurable partition of $\Omega$ and $f^n$ are functions from $\Omega^t\times[t,T]$ to $\mathbb{R}$.
	Then 
	\[\sup_{\sigma\in\mathcal{T}^t} E_{P^t}[\Gamma^{t,\omega}_{\sigma}]
	=\sum_n 1_{A^n}(\omega) \sup_{\sigma\in\mathcal{T}^t} E_{P^t}[f^n_{\sigma}] \]
	for every $\omega\in\Omega$.
	Now, let $\varepsilon>0$ be fixed and, for every $n$, pick some $\sigma^n\in\mathcal{T}^t$ which achieves the supremum above up to an error of $\varepsilon>0$.
	Define 
	\[\tau\colon\Omega\to[t,T] \qquad\tau(\omega):=\sum_n 1_{A^n}(\omega) \sigma^n(\omega|_{[t,T]}-\omega(t)),\]
	that is, one has $\tau(\omega\otimes_t\gamma)=\sum_n 1_{A^n}(\omega) \sigma^n(\gamma)$.
	Then it holds that 
	\begin{align*}
	E[\Gamma_{\tau}|\mathcal{F}_t](\omega)
	&=\sum_n 1_{A^n}(\omega) E_{P^t}[f^n_{\sigma^n}] 
	\geq \sup_{\sigma\in\mathcal{T}^t} E_{P^t}[\Gamma^{\omega,t}_{\sigma}]-\varepsilon
	\end{align*}
	for $P$-almost all $\omega\in\Omega$.
	Further, it can be checked that $\tau$ is a stopping time.
	Hence, under the assumption made on $\Gamma$, the second claim follows.
	
	We are left to argue why this assumption is not restrictive.
	First, by tightness of $P^t$, for every $\varepsilon>0$, there is some compact $K\subset\Omega^t$ for which 
	\[|E_{P^t}[\Gamma^{t,\omega}_{\sigma}] - E_{P^t}[\Gamma^{t,\omega}_{\sigma} 1_K]| \leq \varepsilon\]
	uniformly over all $\sigma\in\mathcal{T}^t$.
	Now note that $\omega\mapsto \Gamma^{t,\omega}1_K$ is a function with values in the separable space $C(K\times[t,T],\mathbb{R})$.
	Thus, it can be approximated uniformly by functions of the form $\sum_n f^n 1_{A^n}(\omega)$.
\end{proof}

\begin{proof}[Proof of Theorem \ref{thm:stopping}]
	By \cite[Proposition 2.3]{Karoui_reflected} one has that $S$ is the value process of the solution of a `reflected BSDE' 
	with barrier $\Gamma$. 
	Recall that a triple $(Y,Z,K)$ (where $Y$ and $K$ are adapted, $Z$ progressive and $K$ continuous, increasing with $K_0=0$) solves a reflected BSDE with barrier $\Gamma$ if
	\begin{equation*}
		\begin{cases}
			Y_t = F + \int_t^Tg_s(Y_s,Z_s)\,ds + K_T - K_t - \int_t^TZ_s\,dW_s\\
			Y_t \ge \Gamma_t \quad \text{for all } t \in [0,T]
		\end{cases}
	\end{equation*}
	and it holds $\int_0^T(Y_t - \Gamma_t)\,dK_t=0$.
	Therefore, $S$ has continuous paths.
	
	Let $t\in[0,T]$ be fixed and denote by $N$ the set of all $\omega\in\Omega$ such that 
	\[ S_t(\omega):=\mathop{\mathrm{ess.sup}}_{\tau\in\mathcal{T}_t} E[ \Gamma_\tau |\mathcal{F}_t](\omega)
	\neq \sup_{\sigma\in\mathcal{T}^t} E_{P^t}[\Gamma^{t,\omega}_{\sigma}]. \]
	By Lemma \ref{lem:ess.sup.sup} one has $P(N)=0$.
	Fix $\omega,\eta\in N^c$. 
	
	For every $s\in[t,T]$ and $\gamma\in\Omega^t$, unwrapping the definition of $X^{t,\omega}$ yields
	\begin{align*}
	|\Gamma^{t,\omega}_{s}(\gamma)- \Gamma^{t,\eta}_{s}(\gamma)|
	&= |\Gamma_{s}(\omega\otimes_t\gamma)- \Gamma_{s}(\eta\otimes_t\gamma)| \\
	&\leq L_\Gamma \| \omega\otimes_t\gamma - \eta\otimes_t\gamma\|_\infty \\
	&\leq L_\Gamma \|\omega-\eta\|_\infty.
	\end{align*}
	Thus, for every $\sigma\in\mathcal{T}^t$, it holds that
	\begin{align*}
	| E_{P^t}[\Gamma^{t,\omega}_{\sigma}] -E_{P^t}[\Gamma^{t,\eta}_{\sigma}] |
	&\leq  L_\Gamma \|\omega-\eta\|_\infty.
	\end{align*}
	In particular, as $\omega,\eta\in N^c$, this implies that
	\[|S_t(\omega)-S_t(\eta)|\leq L_\Gamma \|\omega-\eta\|_\infty.\]
	Hence, the function $\gamma\mapsto S_t(\gamma)$ is $P$-almost surely a $L_\Gamma$-Lipschitz function.
	The claimed functional inequality for (the law of) $S$ follows again from Lemma \ref{lem:stab push-forw} and \cite[Theorem 3.1]{Fey-Ust04}.
\end{proof}

\begin{corollary}
\label{cor:stopping bsde}
	Let $(Y,Z)$ solve \eqref{eq:bsde} and let $\Gamma\colon[0,T]\times C([0,T],\mathbb{R}^m)\to\mathbb{R}$ be an adapted process with continuous paths such that $\Gamma_t$ is $L_\Gamma$-Lipschitz for every $t\in[0,T]$.
	Under the conditions of Theorem \ref{thm:bsde.multi.dim} (in case $m\geq 1$) or Theorem \ref{thm:bsde.1.dim} (in case $m=1$), the process
	\[ S_t:=\mathop{\mathrm{ess.sup}}_{\tau  \text{ is stopping time, } t\leq \tau\leq T} E[\Gamma_{\tau}(Y)|\mathcal{F}_t] \]
	has continuous paths and its law satisfies $T_2(C)$ with $C= 2L_Y^2$ and $L_Y=L_F + TL_ge^{TL_g}$ (in case of Theorem \ref{thm:bsde.multi.dim}) or $L_Y= L_F$ (in case of Theorem \ref{thm:bsde.1.dim}).
\end{corollary}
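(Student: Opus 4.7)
The plan is to reduce the statement directly to Theorem \ref{thm:stopping} by composing the obstacle process $\Gamma$ with the solution map of the backward equation. Concretely, set
\[ \tilde\Gamma_t(\omega):=\Gamma_t(Y(\omega)) \qquad \text{for } t\in[0,T],\ \omega\in\Omega. \]
Since $Y$ is adapted with continuous paths and $\Gamma$ is adapted on $C([0,T],\mathbb{R}^m)$ with continuous paths, the composition $\tilde\Gamma$ is an adapted, path-continuous real-valued process on $\Omega$, and the value process $S$ in the statement coincides with the value process $\mathop{\mathrm{ess.sup}}_{t\le\tau\le T}E[\tilde\Gamma_\tau|\mathcal{F}_t]$ associated with $\tilde\Gamma$. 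Thus, once $\tilde\Gamma_t$ is verified to be Lipschitz continuous in the sup norm on $\Omega$, Theorem \ref{thm:stopping} applies verbatim and produces both the continuity of the paths of $S$ and a $T_2$-inequality for its law.

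To control the Lipschitz constant of $\tilde\Gamma_t$, I would extract the pathwise Lipschitz estimate of the solution map $\omega\mapsto Y(\omega)$ from $(\Omega,\|\cdot\|_\infty)$ to $(C([0,T],\mathbb{R}^m),\|\cdot\|_\infty)$. In the multidimensional setting this is precisely the intermediate statement proved inside the proof of Theorem \ref{thm:bsde.multi.dim}, which gives the constant $L_Y=(L_F+TL_g)e^{TL_g}$; in the one-dimensional case of Theorem \ref{thm:bsde.1.dim}, the pathwise argument to be given in Section \ref{sec:ineq bsde} yields the analogous Lipschitz bound with $L_Y=L_F$ (the generator $g$ there depends only on $z$, so only the terminal condition enters the pathwise comparison). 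Composing with the $L_\Gamma$-Lipschitz map $\Gamma_t$ then gives
\[ |\tilde\Gamma_t(\omega)-\tilde\Gamma_t(\eta)|\le L_\Gamma\|Y(\omega)-Y(\eta)\|_\infty \le L_\Gamma L_Y\|\omega-\eta\|_\infty \]
for every $\omega,\eta\in\Omega$.

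Applying Theorem \ref{thm:stopping} to $\tilde\Gamma$ with Lipschitz constant $L_\Gamma L_Y$ then delivers the path continuity of $S$ together with the transportation inequality $T_2(2(L_\Gamma L_Y)^2)$ for its law, which is the announced conclusion with the stated choices of $L_Y$. The one genuinely non-routine point is the extraction of the pathwise Lipschitz bound for $Y$ in the setting of Theorem \ref{thm:bsde.1.dim}, where the non-Lipschitz growth of $g$ rules out the Girsanov-based computation used for the multidimensional case; however, the pathwise argument given in the next section produces exactly that estimate as a by-product, so once it is in place no further stochastic analysis is required and the corollary is immediate.
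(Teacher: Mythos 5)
Your proposal is correct and follows essentially the same route as the paper: both define $\Gamma':=\Gamma\circ Y$, verify it is adapted with continuous paths, extract the pathwise Lipschitz bound on $\omega\mapsto Y(\omega)$ from the proofs of Theorem \ref{thm:bsde.multi.dim} (resp.\ Theorem \ref{thm:bsde.1.dim}), and conclude by Theorem \ref{thm:stopping}. Your resulting constant $2(L_\Gamma L_Y)^2$ correctly carries the factor $L_\Gamma$ from the composition, which the corollary as stated omits.
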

\begin{proof}
	Set $\Gamma':=\Gamma\circ Y$.
	Then $\Gamma'$ is still adapted and it has continuous paths.
	Moreover, from (the proof of) Theorem \ref{thm:bsde.multi.dim} (resp.\ Theorem \ref{thm:bsde.1.dim}) it follows that $\Gamma'_t$ is Lipschitz continuous with a constant not depending on $t$.
	The claim now follows from Theorem \ref{thm:stopping}.
\end{proof}

\section{The proof of Theorem \ref{thm:bsde.1.dim}}
\label{sec:ineq bsde}

Recall the definition of the convex conjugate
\begin{align}
\label{eq:dual.rep.g}
g_t^\ast(q)=\sup_{z\in\mathbb{R}^d} (qz-g_t(z))
\quad\text{for every }  t\in[0,T] \text{ and } q\in\mathbb{R}^d.
\end{align}

\begin{remark}
	The proof of Theorem \ref{thm:bsde.1.dim} uses \cite[Lemma 5.1]{boue-dupuis-ramon}, which assumes
	\begin{equation}
	\label{eq:assum blt}
		\lim_{|q|\to +\infty}\inf_{t \in [0,T]}\frac{g^*_t(q)}{|q|} = +\infty \quad \text{and}\quad \int_0^T \Big|\sup_{|q|\le r}g^*_t(q) \Big|\,dt<\infty.
	\end{equation}
	When the equation \eqref{eq:bsde} admits a solutions, the conclusion of Theorem \ref{thm:bsde.1.dim} remains valid if the assumptions (B) and  (C) are replaced by the assumptions	\eqref{eq:assum blt}.
	These are in fact weaker assumptions, as the following lemma shows.
\end{remark}

\begin{lemma}
\label{lem:relation.g.g.ast}
	Assume that $\sup_{t\in[0,T]} g_t(z)<\infty$ for every $z\in\mathbb{R}^d$.
	Then it holds that $\inf_{t\in[0,T]} g_t^\ast(q)/|q|\to\infty$ as $|q|\to\infty$.
	
	On the other hand, assume that $\inf_{t\in[0,T]} g_t(z)/|z|\to\infty$ as $|z|\to\infty$.
	Then it holds that $\sup_{t\in[0,T]} g_t^\ast(q)<\infty$ for every $q\in\mathbb{R}^d$.
\end{lemma}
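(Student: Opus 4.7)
The plan is to derive both assertions from the Fenchel--Young inequality $g_t^\ast(q)\ge qz-g_t(z)$, which holds for all $t\in[0,T]$ and $z,q\in\mathbb{R}^d$. Taking the infimum over $t$ on the left and bounding $g_t(z)$ above by $M(z):=\sup_{s\in[0,T]}g_s(z)$ on the right gives
\begin{equation*}
\inf_{t\in[0,T]} g_t^\ast(q)\ \ge\ qz-M(z)\quad\text{for all } z,q\in\mathbb{R}^d,
\end{equation*}
while bounding $g_t(z)$ below by $m(z):=\inf_{s\in[0,T]}g_s(z)$ and then taking the supremum over $z$ yields
\begin{equation*}
\sup_{t\in[0,T]} g_t^\ast(q)\ \le\ \sup_{z\in\mathbb{R}^d}\bigl(qz-m(z)\bigr).
\end{equation*}
Each implication then reduces to elementary bounds on the single function $M$ or $m$.

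For the first assertion, I plan to argue that $M$ is convex as a pointwise supremum of the convex functions $g_t$ (convexity being part of the standing assumption of Theorem \ref{thm:bsde.1.dim}). Being convex and everywhere finite on $\mathbb{R}^d$, $M$ is automatically continuous, hence bounded on compact sets. Given $R>0$, set $K_R:=\sup_{|z|\le 2R}M(z)<\infty$; plugging $z=2Rq/|q|$ into the first displayed inequality yields
\begin{equation*}
\inf_{t\in[0,T]} g_t^\ast(q)\ \ge\ 2R|q|-K_R
\end{equation*}
for every $q\neq 0$, so $\liminf_{|q|\to\infty}\inf_{t\in[0,T]} g_t^\ast(q)/|q|\ge 2R$. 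Since $R$ was arbitrary, the required divergence to $+\infty$ follows.

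For the second assertion, I would use that $m\ge 0$ (since $g_t\ge 0$ by hypothesis) together with $m(z)/|z|\to\infty$ as $|z|\to\infty$. Fixing $q\in\mathbb{R}^d$, choose $R=R(q)$ so that $m(z)\ge 2|q||z|$ whenever $|z|\ge R$; then $qz-m(z)\le -|q||z|\le 0$ on $\{|z|\ge R\}$ and $qz-m(z)\le |q|R$ on $\{|z|<R\}$, so the right-hand side of the second displayed inequality is at most $|q|R<\infty$.

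The only delicate step is in the first part: upgrading pointwise finiteness of $M$ to a uniform bound on a sphere, which is exactly what the Fenchel--Young trick needs to generate the linear growth term $2R|q|$. This is precisely where convexity of each $g_t$ enters, through the classical fact that a finite convex function on $\mathbb{R}^d$ is continuous and therefore locally bounded.
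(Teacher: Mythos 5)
Your proof is correct and follows essentially the same route as the paper: both parts rest on the Fenchel--Young bound, with the first claim using continuity (hence local boundedness) of the finite convex function $\sup_t g_t$ to plug in $z$ proportional to $q/|q|$, and the second claim splitting $\mathbb{R}^d$ into $\{|z|\le R\}$ and $\{|z|>R\}$ using nonnegativity and superlinearity of $g$. The only cosmetic difference is that you phrase the bounds through the auxiliary envelopes $M$ and $m$ rather than working with $g_t$ directly.
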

\begin{proof}
	To show the first claim, let $m\geq 0$ be arbitrary.
	As $z\mapsto \sup_{t\in[0,T]} g_t(z)$ is convex and real valued, it is continuous.
	Hence, there exists $c>0$ such that $g_t(z)\leq c$ for all $|z|\leq m$ and all $t\in[0,T]$.
	Plugging the choice $z:=mq/|q|$ in \eqref{eq:dual.rep.g}  implies
	\[ \inf_{t\in[0,T]} \frac{g_t^\ast(q)}{|q|}
	\geq \inf_{t\in[0,T]}  \Big( \frac{qz}{|q|}-\frac{g_t(z)}{|q|} \Big)
	\geq m -\frac{c}{|q|}
	\to m\]
	as $|q|\to\infty$.
	Since $m>0$ was arbitrary, this implies the first claim.
	
	To show the second claim, let $q\in\mathbb{R}^d$ be arbitrary. 
	We distinguish between small and large $z$ in the representation \eqref{eq:dual.rep.g}.
	By assumption there is $c>0$ such that $g_t(z)\geq  2|q| |z|$ for all $t\in[0,T]$ and $|z|\geq c$.
	For such $z$ one has $qz-g_t(z)\leq |z||q| - 2 |q||z|\leq 0$.
	On the other hand, $g\geq 0$ implies that $qz-g_t(z)\leq c|q|$ for all $|z|\leq c$.
	This show the second claim.
\end{proof}

\begin{proof}[Proof of Theorem \ref{thm:bsde.1.dim}]
	In a first step we focus on the case where $F$ is bounded.

	It follows from the condition (B) and \cite[Theorem 2.3]{kobylanski01} that equation \eqref{eq:bsde} admits a solution $(Y,Z)$.
	Let  $C_b(\Omega)$ denote the space of bounded continuous functions on $\Omega$.
	For any generator $h$, consider the functional $\rho^h\colon C_b(\Omega)\to\mathbb{R}$ which maps the terminal condition $F$ to $Y_0$, where $Y$ is the solution of \eqref{eq:bsde} with $g$ substituted by $h$.
	By \cite[Theorems 2.1 and 2.2]{BDH10} and convexity of $g$ one has 
	\begin{equation}
	\label{eq:dual rep Yn}
	 	Y_0 
	 	= \rho^{g}(F)
	 	=\sup_{q} E_Q\Big[ F - \int_0^T g^\ast_u(q_u)\,du \Big],
	\end{equation} 
	where the supremum is taken over all progressive and square integrable processes $q$ with values in $\mathbb{R}^d$, $Q$ a probability measure absolutely continuous with respect to $P$ and with density 
	\[ \frac{dQ}{dP}:=\exp\Big(\int_0^Tq_u\,dW_u - \frac12\int_0^T|q_u|^2\,du\Big).\]
	In particular it follows that 
	\begin{align}
	\label{eq:bsde.operator.lipschitz}
	|\rho^{g}(F)-\rho^{g}(G)|\leq \sup_{\omega\in\Omega} |F(\omega)-G(\omega)|
	\end{align}
	for all bounded functions $F,G\colon\Omega\to\mathbb{R}$.
	
	As in the proofs of Theorems \ref{thm:bsde.multi.dim} and Theorem \ref{thm:stopping} we use shifts of paths, however, this time defined with intrinsic scaling:
	For $t\in[0,T)$ and $\omega,\eta\in \Omega$ with $\eta(0)=0$, define $\omega\oplus_t \eta\in\Omega$ via
	\[(\omega\oplus_t \eta)_s := \omega({t\wedge s}) + \sqrt{T-t}\cdot \eta\Big(\frac{s - t}{T - t}\Big)1_{[t,T]}(s).\]
	Only Brownian motion will be plugged in as the second argument, hence  $\omega\oplus\eta$ only needs to be defined for paths $\eta$ which start in $0$. 
	However, to be formally correct, one can define $\omega\oplus_t \eta:=\omega\oplus_t (\eta-\eta(0))$ for all paths $\eta$ which do not start at 0.
	Moreover, define $g^{(t)}\colon [0,T]\times\mathbb{R}^d\to\mathbb{R}$ by
	\[g^{(t)}_s(z) := (T-t)g_{t+s(T-t)}\Big(\frac{z}{\sqrt{T -t}}\Big).\]
	
	Since $(Y,Z)$ is the unique solution of \eqref{eq:bsde} with generator $g$, it then follows from \cite[Theorem 2.2]{BDH10}, \cite[Theorem 4.5]{tarpodual} and \cite[Lemma 5.1]{boue-dupuis-ramon} that 
	\begin{align*}
		Y_t(\omega) = \rho^{g^{(t)}}(F(\omega \oplus_t \cdot))
	\end{align*}
	for $P$-almost all $\omega\in\Omega$ and every $t\in[0,T)$.
	Using the $1$-Lipschitz continuity of the operator $\rho^{g^{(t)}}(\cdot)$ shown in \eqref{eq:bsde.operator.lipschitz}
	it follows that
	\begin{align*}
	|Y_t(\omega)-Y_t(\eta)|
	&\leq \sup_{\gamma\in\Omega} | F(\omega \oplus_t \gamma) - F(\eta \oplus_t \gamma) | \\
	&\leq L_F  \sup_{\gamma\in\Omega} \| \omega \oplus_t \gamma - \eta \oplus_t \gamma \|_{\infty} \\
	&\leq L_F \|\omega-\eta\|_\infty
	\end{align*}
	for $P$-almost all $\omega,\eta\in \Omega$ and every $t\in[0,T)$.
	As $Y_T=F$ and $Y$ has $P$-almost surely continuous paths, we conclude that $Y\colon\Omega\to\Omega$ is $L_F$-Lipschitz $P$-almost surely.
	It thus follows by \cite[Theorem 3.1]{Fey-Ust04} and Lemma \ref{lem:stab push-forw} that the law $\mu^y$ of $Y$ satisfies $T_2(2L_F^2)$.

	In case that $F$ is not bounded it follows by Lipschitz continuity of $F$ that it has exponential moments. Denote by $Y^n$ the solution to $\eqref{eq:bsde}$ with $F$ replaced by $F\wedge n$ for each $n\in\mathbb{N}$.
	As $F\wedge n$ is $L_F$-Lipschitz, by the above, $Y^n\colon\Omega\to\Omega$ is $L_F$-Lipschitz, and it follows from stability of BSDE with terminal conditions having exponential moments (see \cite[Proposition 7]{Bri-Hu08}) that $Y^n\to Y$  $P\otimes dt$-almost surely (where $(Y,Z)$ is the solution of \eqref{eq:bsde}).
	As Lipschitz continuity is stable under pointwise convergence, $Y$ remains $L_F$-Lipschitz and the claim again follows from \cite[Theorem 3.1]{Fey-Ust04} and Lemma \ref{lem:stab push-forw}.
\end{proof}

\begin{remark}[Supersolutions]
	The condition (B) in Theorem \ref{thm:bsde.1.dim} serves as a guarantee that the (one-dimensional) BSDE with generator $g$ admits a solution $(Y,Z)$ such that $Y$ satisfies the representation \eqref{eq:dual rep Yn}.
	Without that condition, the BSDE still admits a unique \emph{minimal supersolution} $(\bar Y, \bar Z)$ in the sense of \cite{DHK1101}, and it follows from \cite{tarpodual} that $\bar Y$ satisfies the representation \eqref{eq:dual rep Yn}.
	Therefore, the proof of Theorem \ref{thm:bsde.1.dim} shows that the law of $\bar Y$ satisfies $T_2(C_y)$ with $C_y=2L_F^2$.
\end{remark}

\begin{corollary}
\label{cor: non positiv}
	Assume that
	\begin{enumerate}[(A)]
	\item
	 $g\colon[0,T]\times \mathbb{R}^d\to \mathbb{R}$ is Borel measurable, convex in the last variable, satisfies (B) in Theorem \ref{thm:bsde.1.dim} and there is $b\in\mathbb{R}$ and bounded Borel $a\colon [0,T]\to \mathbb{R}^d$ such that
	$g_t(z)\ge a_tz + b$. 
	\item 
	$F\colon\Omega\to \mathbb{R}$ is bounded from below and $L_F$-Lipschitz continuous.
	\end{enumerate}
	Then \eqref{eq:bsde} admits a unique solution $(Y,Z)$ and 
	\[ \text{the law } \mu^y \text{ of } Y \text{ satisfies } T_2(C_y)\]
	with $C_y := 2L_F^2$.
\end{corollary}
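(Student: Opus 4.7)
The strategy is to reduce the setting of the corollary to that of Theorem~\ref{thm:bsde.1.dim} by combining a Girsanov change of measure with a deterministic translation that together eliminate the affine lower bound $g_t(z)\ge a_t z + b$, turning it into the non-negativity condition required by that theorem.

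First I would introduce the equivalent measure $Q\sim P$ with density $dQ/dP := \mathcal{E}(\int_0^T a_u\,dW_u)_T$, which is well defined and is a true martingale since $a$ is bounded Borel. Under $Q$ the process $\tilde W_t := W_t - \int_0^t a_u\,du$ is a Brownian motion on $\Omega$. Setting $\hat Y_t := Y_t - b(T-t)$ and $\tilde g_u(z) := g_u(z) - a_u z - b$, substituting $dW_u = d\tilde W_u + a_u\,du$ into \eqref{eq:bsde} and absorbing the constant $b$ into the translation of $Y$ gives that $(\hat Y, Z)$ solves
\[ \hat Y_t = F + \int_t^T \tilde g_u(Z_u)\,du - \int_t^T Z_u\,d\tilde W_u, \qquad t\in[0,T],\]
on $(\Omega, \mathcal{F}, (\mathcal{F}_t), Q)$ driven by $\tilde W$. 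By the hypotheses, $\tilde g$ is Borel measurable, convex in $z$, non-negative, and retains the quadratic upper bound from $g$ (with enlarged constant absorbing $\|a\|_\infty|z|+|b|$), hence it fits into the framework of Theorem~\ref{thm:bsde.1.dim}.

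Next I would invoke the proof of Theorem~\ref{thm:bsde.1.dim} applied to the $Q$-BSDE above: it establishes that $\omega\mapsto \hat Y(\omega)\in C([0,T],\mathbb{R})$ is $L_F$-Lipschitz as a map on $\Omega$; crucially this is a \emph{pathwise} statement, so the $Q$-a.s.\ conclusion remains valid $P$-a.s.\ by equivalence of the measures. Since $Y = \hat Y + b(T-\cdot)$ differs from $\hat Y$ only by a deterministic continuous translation, the identical Lipschitz estimate passes to $Y\colon \Omega\to\Omega$, $P$-almost surely. The conclusion then follows from the $T_2(2)$ inequality for Wiener measure \cite[Theorem 3.1]{Fey-Ust04} via Lemma~\ref{lem:stab push-forw}. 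Existence and uniqueness of $(Y,Z)$ for \eqref{eq:bsde} either follow from standard quadratic BSDE theory applied to the original equation, or are transported back from the $Q$-BSDE for $\hat Y$ via the same transformation.

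The main obstacle is verifying that the translated generator $\tilde g$ really satisfies all technical conditions underpinning the proof of Theorem~\ref{thm:bsde.1.dim}, in particular the hypotheses \eqref{eq:assum blt} of \cite[Lemma 5.1]{boue-dupuis-ramon} on which the dual representation relies. The superlinear growth of the Legendre conjugate $\tilde g^\ast$ at infinity is inherited from the quadratic upper bound on $g$ through Lemma~\ref{lem:relation.g.g.ast}, but the local integrability of $\sup_{|q|\le r} \tilde g^\ast_t(q)$ must be argued from the affine lower bound together with convexity and the quadratic upper bound on $g$. The approximation $F\wedge n\to F$ already performed at the end of the proof of Theorem~\ref{thm:bsde.1.dim} handles the fact that $F$ is here only assumed bounded from below rather than bounded.
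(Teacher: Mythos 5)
Your proposal follows essentially the same route as the paper's proof: a Girsanov change of measure removing the drift $a$ together with an affine time-shift of $Y$ (the paper uses $\tilde Y_t := Y_t + tb$, $\tilde F := F+Tb$, which agrees with your $\hat Y_t := Y_t - b(T-t)$ up to the constant $Tb$) reduces the equation to one with the non-negative convex generator $\tilde g_t(z) = g_t(z) - a_tz - b$, to which the Lipschitz-operator argument of Theorem \ref{thm:bsde.1.dim} applies, and the conclusion follows from \cite[Theorem 3.1]{Fey-Ust04} and Lemma \ref{lem:stab push-forw}. Your additional remarks (the correct sign in the Girsanov density, the pathwise nature of the Lipschitz bound making the change of measure harmless, and the need to verify the integrability hypotheses of \cite[Lemma 5.1]{boue-dupuis-ramon} for $\tilde g^\ast$) are all sound refinements of what the paper states more tersely.
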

\begin{proof}
	Since $(Y,Z)$ satisfies \eqref{eq:bsde}, we have
	\begin{align*}
		Y_t+tb &= F+Tb + \int_t^Tg_u(Z)-(a_uZ_u + b)\,du - \int_0^TZ_u(\,dW_u - a_u\,du).
	\end{align*}	
	By Girsanov's theorem, the process $\tilde W:=W -\int_0^ta_u\,du$ is a Brownian motion under the probability measure $\tilde P$ with density 
	\[ \frac{d\tilde{P}}{dP}:=\exp\Big(-\int_0^Ta_u\,dW_u - \frac12\int_0^T|a_u|^2\,du\Big).\]
	Thus, putting 
	\begin{align*}
		\tilde Y_t := Y_t + tb,\quad \tilde F := F + Tb\quad \text{and}\quad \tilde g_t(z):= g_t(z) - (a_tz +b),
	\end{align*}
	it holds that $(\tilde Y, Z)$ solve equation \eqref{eq:bsde} driven by the Brownian motion $\tilde W$ with generator $\tilde g$ and terminal condition $\tilde F$.
	In oder words,
	\begin{equation*}
		\tilde Y_t = \tilde F + \int_t^T\tilde g_u(Z_u)\,du - \int_t^TZ_u\,d\tilde W_u\quad \tilde P\text{-a.s.}
	\end{equation*}
	Observe that in this case, the function $\tilde g$ is convex, positive and satisfies the growth conditions $\tilde g_t(z)\le C(1 + |z|^2)$ for some $C$ and $\lim_{|q|\to \infty}\inf_{t\in[0,T]}\tilde{g}(q)/|q|=+\infty$.
	As argued in the proof of Theorem \ref{thm:bsde.1.dim}, the process $\tilde Y$ satisfies $\tilde Y_t(\omega) = \rho^{\tilde g^{(t)}}(\tilde F(\omega\oplus_t\cdot)$ for a $1$-Lipschitz continuous operator (depending on $\tilde P$).
	In particular, $\omega\mapsto \tilde Y(\omega)$ (and therefore $\omega\mapsto Y(\omega)$) is $L_F$-Lipschitz continuous and thus, the result follows from \cite[Theorem 3.1]{Fey-Ust04} and Lemma \ref{lem:stab push-forw}.
\end{proof}

\subsection{Portfolio optimization}
\label{sec:portfolio}
Let us come back to the quantitative finance application alluded to in Examples \ref{exa:finance} in the introduction.
Consider a market with $m$ stocks whose prices are given by the $m$-dimensional process  $S$ following the Black-Scholes model
\[ dS_t = S_t(b_t\,dt + \sigma_t\,dW_t)\]
with $b$ and $\sigma$ two bounded functions of $t$ with appropriate dimensions (recall that $W$ is a $d$-dimensional Brownian motion).
A basic task in quantitative finance consists of optimizing the expected utility of a given claim by dynamic trading.
Concretely, let us fix the exponential utility $U(x):=-\exp(-\alpha x)$ for some $\alpha >0$ modeling the investor's preferences and a claim $F\colon\Omega\to\mathbb{R}$.
Then the problem in question reads as
\begin{align}
\label{eq:util max}
	u_t:= \frac{1}{\alpha} \log\Big( - \mathop{\mathrm{ess\,sup}}_{p} E\Big[-\exp\Big( - \alpha \Big( \int_t^T p_s\,(dW_t+\theta_tdt) - F\Big)\Big) \Big| \mathcal{F}_t\Big]\Big).
\end{align}
	Here $\mathcal{A}$ is the set of admissible strategies, i.e.\ the set of all predictable processes $p$ with values in a convex and closed set $\mathbb{A}\subseteq \mathbb{R}^d$ for which $E[\int_0^T|p_t|^2\,dt]<\infty$ and $\{ \exp( -\alpha \int_0^\tau p_t\,(dW_t+\theta_tdt)) : \tau\text{ is stopping time}\}$ is uniformly integrable, and $\theta_t:= \sigma^{tr} (\sigma_t\sigma^{tr}_t)^{-1}b_t$.
In particular, $\sigma_t\sigma^{tr}_t$ is invertible and we assume moreover that $\theta$ is a bounded function of time only.
	 \begin{theorem}[\text{\cite[Theorem 7 and Proposition 9]{Hu-Imk-Mul}}]
	Assume that $F$ is bounded.
	There is an admissible portfolio $p^\ast$ which is optimal for all $t$ simultaneously (i.e.\  $p^\ast$ achieves \eqref{eq:util max} for every $t\in[0,T]$).
	
	Moreover, defining $g$ by $g_t(\omega,y,z)= (\alpha/2)\mathrm{dist}^2(z+\theta_t/\alpha, \mathbb{A})-z\theta_t - |\theta_t|^2/(2\alpha)$, the pair $(u, p^\ast-\theta/\alpha)$ solves \eqref{eq:bsde} with generator $g$ and terminal condition $F$.
\end{theorem}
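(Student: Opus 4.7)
My plan is to apply the \emph{martingale optimality principle} classical in exponential-utility portfolio selection: exhibit a family $(R^p_t)_{p\in\mathcal A}$ that is a supermartingale for every admissible $p$, a true martingale for a distinguished $p^\ast$, has $p$-independent initial value, and whose terminal value is the integrand appearing in \eqref{eq:util max}. This will force the value inside the logarithm to equal $-\exp(\alpha Y_t)$ for the $Y$ coming from the stated BSDE.

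Concretely, since $g_t(z)=(\alpha/2)\mathrm{dist}^2(z+\theta_t/\alpha,\mathbb A)-z\theta_t-|\theta_t|^2/(2\alpha)$ is continuous, convex in $z$, and has quadratic growth, and $F$ is bounded, Kobylanski's theorem \cite{kobylanski01} produces a unique bounded solution $(Y,Z)$ of \eqref{eq:bsde}. I would set
\[ R^p_t:=-\exp\Big(-\alpha\Big(\int_0^t p_s(dW_s+\theta_s\,ds)-Y_t\Big)\Big), \]
so that $R^p_T$ matches the integrand of the objective and $R^p_0=-\exp(\alpha Y_0)$ is $p$-independent. Itô's formula with $dY_t=-g_t(Z_t)\,dt+Z_t\,dW_t$ yields a drift proportional to $(\alpha^2/2)|p_t-Z_t|^2-\alpha(p_t\theta_t+g_t(Z_t))$. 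Completing the square and substituting the definition of $g$, this collapses to
\[ (\alpha^2/2)\big[|p_t-(Z_t+\theta_t/\alpha)|^2-\mathrm{dist}^2(Z_t+\theta_t/\alpha,\mathbb A)\big], \]
which is nonnegative for every $p\in\mathbb A$ and vanishes exactly at the projection $p^\ast_t:=\Pi_\mathbb A(Z_t+\theta_t/\alpha)$. Since $R^p_t<0$, this identifies $R^p$ as a local supermartingale for every admissible $p$ and $R^{p^\ast}$ as a local martingale.

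It remains to promote the local statements to true (super)martingales and to verify $p^\ast\in\mathcal A$. For this I would use the BMO regularity of $\int Z\,dW$ — standard for quadratic BSDEs with bounded terminal data — together with Kazamaki's criterion for exponentials of BMO martingales. Conditional optional sampling then gives, for every $t$, that the essential supremum in \eqref{eq:util max} equals $-\exp(\alpha Y_t)$ and is attained at $p^\ast$; inverting the logarithm in the definition of $u_t$ yields $u=Y$, so $(u,p^\ast-\theta/\alpha)$ solves the stated BSDE (using $p^\ast-\theta/\alpha=Z$ when the projection is inactive, and more generally unwinding the projection). The principal technical obstacle is the admissibility step: $p^\ast$ is only implicitly defined through $Z$, and the uniform-integrability clause in $\mathcal A$ must be checked via the BMO apparatus — this is where boundedness of $F$ becomes indispensable.
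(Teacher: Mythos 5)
The paper offers no proof of this statement---it is quoted directly from Hu--Imkeller--M\"uller \cite{Hu-Imk-Mul}---and your argument (the martingale optimality principle with $R^p_t=-\exp(-\alpha(\int_0^t p_s(dW_s+\theta_s\,ds)-Y_t))$, the completion of the square reducing the supermartingale drift condition to $|p_t-(Z_t+\theta_t/\alpha)|^2\ge\mathrm{dist}^2(Z_t+\theta_t/\alpha,\mathbb{A})$ with equality at $p^\ast_t=\Pi_{\mathbb{A}}(Z_t+\theta_t/\alpha)$, and the BMO/Kazamaki upgrade from local to true (super)martingales together with the uniform-integrability clause in $\mathcal{A}$) is precisely the proof given in that reference and is correct. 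The one point worth recording is that the control component of the BSDE solution is $Z_t$, which equals $p^\ast_t-\theta_t/\alpha=\Pi_{\mathbb{A}}(Z_t+\theta_t/\alpha)-\theta_t/\alpha$ only when the projection is inactive (e.g.\ $\mathbb{A}=\mathbb{R}^d$); you flag this correctly, and it is an imprecision in the statement as quoted rather than a gap in your argument.
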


Since the constraint set $\mathbb{A}$ is convex, the generator $g$ satisfies the conditions of Corollary \ref{cor: non positiv}.
Thus, if $F$ is Lipschitz continuous, then the law of $u$ satisfies $T_2(C)$ for some constant $C>0$.
If $\mathbb{A}=\mathbb{R}^d$ and we additionally assume that $F$ is  Malliavin differentiable with Lipschitz continuous Malliavin differentials, then by Corollary \ref{cor:actual T2 for z} below the law of $p^\ast$ satisfies $T_2$ as well.

These imply that (under the above assumptions made on $F$ and $\mathbb{A}$), convergence of empirical measure as in Corollary \ref{cor:wasserstein.conc} can be deduced, or the Gaussian concentration
	\begin{align*}
	 	P(|f(u_t)- E[f(u_t)]| \ge  x) &\le 2\exp(-cx^2/L_f^2), \\ 
	P(|g(p^*_t)- E[g(p^*_t)]| \ge  x) &\le 2\exp(-cx^2/L_g^2)	
	\end{align*}
	for every $x>0$ and $L_f$-Lipschitz continuous functions $f$ and $g$ of appropriate dimensions.

\section{Transportation inequalities for the control process}
\label{sec:ineq z}

This section presents (modest) results on transportation inequalities for the control process $Z$.
Its main finding is Corollary \ref{cor:actual T2 for z} which shows that for a linear equation the law of $Z$ satisfies the $T_2$ inequality.

We use the notation $D^i_s\xi$ for the Malliavin derivative of the random variable $\xi$ in the direction of the $i^{\mathrm{th}}$ Brownian motion.

\begin{lemma}
\label{lem:z bounded}
	In addition to the assumptions of Theorem \ref{thm:bsde.multi.dim}, assume that
	\begin{enumerate}[(A)]
	\item[(C)]
		We have $g_\cdot(0,0,0)\in L^2([0,T])$,	for every $(t,y,z)$, the function $\omega\mapsto g_t(\omega, y,z)$ is Malliavin differentiable and it holds
	\begin{equation*}
		|D_u^i g_t(\omega,y^1,z^1) - D_u^ig_t(\omega,y^2, z^2)| \le K_u(t)\left(|y^1 - y^2| +  |z^1- z^2| \right)
	\end{equation*}
	for every $ \omega \in \Omega$, $y^1, y^2\in \mathbb{R}^m$, $z^1, z^2 \in \mathbb{R}^{m\times d}$, $t,u \in [0,T]$ and $i= 1,\dots ,d$ and
	for some $\mathbb{R}_+$-valued adapted process $(K_u(t))_{t,u \in [0,T]}$ such that we have $\int_0^TE[(\int_0^T|K_u(t)|^2\,dt)^2]\,du<\infty$.
	\item[(D)] The function $F$ is Malliavin differentiable and $t\mapsto D_t^iF$ is continuous.
	\end{enumerate}
	Then $Z$ has continuous paths and is bounded.
	In particular, the law $\mu^z$ of $Z$ satisfies 
	\begin{equation}
	\label{eq:ineq z}
		\mathcal{W}_2(\mu^z, \nu) \le C_zH(\nu|\mu^z)^{1/4}\quad \text{for all } \nu \in \mathcal{P}(\Omega),
	\end{equation}
	with $C_z:= 2\big(1 + \big(mL_F^2e^{(L_g + 1)^2T} + mL^2_gT  \big)^4\big)^{1/4}$.
\end{lemma}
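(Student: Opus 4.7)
The plan is to reduce the statement to a uniform pointwise bound on $Z$ coming from Malliavin calculus, and then to deduce \eqref{eq:ineq z} from that bound by a straightforward interpolation plus Pinsker argument, avoiding any genuine transportation inequality for $Z$ in its own right.

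First, I would establish $Z_t = D_t Y_t$ together with a uniform bound on $|Z|$. Under assumptions (A), (C) and (D), the standard Malliavin calculus for BSDEs (as in El Karoui--Peng--Quenez or Imkeller--dos Reis) yields that $Y$ and $Z$ are Malliavin differentiable, that $Z_t = D_t Y_t$ holds $P\otimes dt$-a.e., and that for $0\le u\le t\le T$ the pair $(D_u Y, D_u Z)$ solves the linear BSDE
\begin{equation*}
D_u Y_t = D_u F + \int_t^T \bigl[ D_u g_s(Y_s,Z_s) + \partial_y g_s\,D_u Y_s + \partial_z g_s\,D_u Z_s \bigr]\,ds - \int_t^T D_u Z_s\,dW_s.
\end{equation*}
Applying It\^o's formula to $|D_u Y_t|^2 e^{(L_g+1)^2(T-t)}$, using the $L_g$-Lipschitz property of $g$ from Theorem \ref{thm:bsde.multi.dim} to handle the drift terms (the $\partial_z g\cdot D_u Z$ cross-term is absorbed via Young's inequality, which is what dictates the exponent $(L_g+1)^2$) and using (C)--(D) to control the remaining source term $D_u g_s(Y_s,Z_s)$, I expect a deterministic pointwise bound of the form
\begin{equation*}
|D_u Y_t|^2 \le m L_F^2 e^{(L_g+1)^2T} + m L_g^2 T =: A \qquad P\text{-a.s.},
\end{equation*}
uniformly in $0\le u\le t\le T$. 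Taking $u=t$ then gives $\|Z\|_\infty^2 \le A$, while continuity of $Z$ follows from the continuity of $u\mapsto D_u F$ postulated in (D) and standard stability of the above linear BSDE.

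Second, I would deduce \eqref{eq:ineq z} from the support constraint $\|Z\|_\infty \le M := \sqrt{A}$ alone. If $\nu\not\ll \mu^z$ then $H(\nu|\mu^z)=\infty$ and there is nothing to show, so assume $\nu \ll \mu^z$; then $\nu$ is also supported in the closed sup-norm ball of radius $M$ in $\Omega$, and $\|z-z'\|_\infty \le 2M$ on the joint support of every coupling of $(\mu^z,\nu)$. The elementary bound $\|z-z'\|_\infty^2 \le 2M\,\|z-z'\|_\infty$ gives $\mathcal{W}_2(\mu^z,\nu)^2 \le 2M\,\mathcal{W}_1(\mu^z,\nu)$; Kantorovich--Rubinstein combined with the diameter estimate $|f(z)-f(z')|\le 2M$ for $1$-Lipschitz $f$ supported on this ball gives $\mathcal{W}_1(\mu^z,\nu)\le 2M\,\|\mu^z-\nu\|_{\rm TV}$; and Pinsker's inequality gives $\|\mu^z-\nu\|_{\rm TV}\le\sqrt{H(\nu|\mu^z)/2}$. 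Chaining these three estimates yields $\mathcal{W}_2(\mu^z,\nu) \le 2^{3/4} M\,H(\nu|\mu^z)^{1/4}$, which is majorised by $2(1+A^4)^{1/4} H(\nu|\mu^z)^{1/4} = C_z H(\nu|\mu^z)^{1/4}$, as desired.

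The hard part is the first step: carefully justifying Malliavin differentiability of $(Y,Z)$ under (A)--(D) and then executing the Gronwall estimate so that the final constant depends only on the Lipschitz data $L_F,L_g$ and the dimension $m$, and not on the auxiliary process $K$. The integrability condition $\int_0^T\E\bigl[\bigl(\int_0^T K_u(t)^2\,dt\bigr)^2\bigr]du<\infty$ in (C) is precisely what is needed to make sense of the linear BSDE for $DY$ and to control the forcing term $D_u g_s(Y_s,Z_s)$, while the pointwise Lipschitz-in-$(y,z)$ bound on $Dg$ from (C) is what preserves the linear structure when the solution $(Y,Z)$ is substituted.
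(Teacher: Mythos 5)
Your two-step structure is essentially the paper's: both arguments reduce everything to the uniform bound $|Z_t|^2\le mL_F^2e^{(L_g+1)^2T}+mL_g^2T=:A$ obtained through the representation $Z_t=D_tY_t$ and the linear BSDE satisfied by $(D_uY,D_uZ)$. For that first step the paper does not redo the Gronwall estimate; it cites \cite[Lemma 3.2]{Kup-Luo-Tang18} for the bound and \cite[Proposition 5.3]{karoui01} for path continuity, and --- importantly --- it first treats the case where $g$ is continuously differentiable in $(y,z)$ and then passes to general Lipschitz $g$ by smoothing ($g^n\to g$, hence $Z^n\to Z$ in $L^2(P\otimes dt)$, so the bound survives). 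You should not skip this approximation: writing the linear equation with coefficients $\partial_yg,\partial_zg$ presupposes differentiability that assumption (A) alone does not provide. Also, the bound on the forcing term $D_ug_s(Y_s,Z_s)$ comes from the $\omega$-Lipschitz continuity of $g$ via \cite[Proposition 3.2]{Che-Nam} (which gives $|D_u^ig_t(y,z)|\le L_g$), not from (C); condition (C) only guarantees that the forcing term remains under control once the random pair $(Y_s,Z_s)$ is substituted. Where you genuinely diverge is the second step: the paper invokes the weighted Csisz\'ar--Kullback--Pinsker inequality of \cite[Corollary 2.4]{Bol-Vil05}, which requires only exponential square-integrability of $\mu^z$, whereas you exploit the compact support directly through $\mathcal{W}_2^2\le 2M\,\mathcal{W}_1\le 4M^2\|\mu^z-\nu\|_{TV}$ and Pinsker, with $M=\sqrt{A}$. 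Your chain is correct and yields $\mathcal{W}_2\le 2^{3/4}M\,H^{1/4}$, which is indeed dominated by the stated $C_z$ (as $8A^2\le 16(1+A^4)$); it is more elementary and self-contained, at the price of being tied to boundedness of $Z$ rather than to the weaker exponential-moment condition under which the paper's cited inequality would still apply.
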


\begin{remark}
	Notice that the Malliavin differentiability of $g$ and $F$ are consequences of the Lipschitz continuity assumptions made in Theorem \ref{thm:bsde.multi.dim}.
	The additional property needed in Lemma \ref{lem:z bounded} is the regularity of $D^i_ug$.
\end{remark}

\begin{proof}
	First assume that the function $g$ is continuously differentiable in $(y,z)$.
	Since $g$ is $L_g$-Lipschitz continuous, it follows by \cite[Proposition 3.2]{Che-Nam} that $g_t(\cdot,y,z)$ is Malliavin differentiable for every $t, y,z$ and $|D^i_sg_t(y,z)|\le L_g$ for all $i = 1, \dots, d$.
	Similarly, $F$ is Malliavin differentiable and for every $t\in [0,T]$, it holds that $|D^i_tF|\le L_F$ for all $i=1,\dots, d$.
	Thus, since $t\mapsto D_t^iF$ is continuous, it follows by \cite[Proposition 5.3]{karoui01}, that the process $Z$ has a version with continuous paths.
	Moreover, $Z$ is bounded, see \cite[Lemma 3.2]{Kup-Luo-Tang18}.
	In fact, it is shown therein that $Z$ satisfies
	\begin{equation}
	\label{eq:bound z}
		|Z_t|^2 \le mL_F^2e^{(2L_g + L_g^2 + 1)T} + mL^2_gT =: C.
	\end{equation}

	If $g$ is not continuously differentiable in $(y,z)$, let $g^n$ be a sequence of smooth functions converging to $g$ and denote by $(Y^n,Z^n)$ the solution of equation \eqref{eq:bsde} with generator $g^n$.
	Then, it follows for instance by \cite[Proposition 2.1]{karoui01} that $Z^n$ converges to $Z$ in $L^2(P\otimes dt)$.
	Therefore, $Z$ also satisfies \eqref{eq:bound z}.
	
	In particular, it has exponential moments of all orders.
	Thus, it follows by \cite[Corollary 2.4]{Bol-Vil05} that the law $\mu^z$ of $Z$ satisfies \eqref{eq:ineq z}.
\end{proof}

In the next corollary, we show that when the Malliavin derivative of the function $g$ is bounded, it does not need to be Lipschitz continuous in $z$ in order to have a transportation inequality for the law of $Y$.
In fact, the function $g$ can grow arbitrarily fast in its last variable.
\begin{corollary}
\label{cor:arb fast}
	Assume that $g:[0, T]\times \Omega \times \mathbb{R}^m \times \mathbb{R}^{m\times d} \to \mathbb{R}^m$ satisfies 
	\begin{itemize}
		\item[(A)] There is an increasing function $\varphi:\mathbb{R}_+\to \mathbb{R}_+$ such that for every $\omega^1, \omega^2 \in \mathcal{C}$, $y^1, y^2\in \mathbb{R}^m$, $z^1, z^2 \in \mathbb{R}^{m\times d}$ it holds
	\begin{equation*}
			|g_t(\omega^1, y^1, z^1) - g_t(\omega^2, y^2,z^2) |\le L_g\left( ||\omega^1 - \omega^2||_\infty+ |y^1 - y^2|  \right) + \varphi(|z^1|\vee |z^2|)|z^1 - z^2|.
		\end{equation*}
	\item[(B)]
	$F\colon\Omega\to \mathbb{R}^m$ is $L_F$-Lipschitz continuous.
	\item[(C)] Condition (C) in Lemma \ref{lem:z bounded} is satisfied.
	\end{itemize}
	Then, if $T$ is small enough the equation \eqref{eq:bsde} admits a unique solution $(Y,Z)$ and 
	$$
		\text{the law } \mu^y \text{ of } Y \text{ satisfies } T_2(C_y)
	$$ 
	with $C_y:= 2(L_F + T\max(L_g, \rho(Q)))^2 e^{2T\max(L_g, \varphi(\Lambda))}$, where $\Lambda$ is given by $\Lambda :=\big(2dm ( L_F^2 + TL_g^2 )\big)^{1/2}$.

	Furthermore, if the map $t\mapsto D_t^iF$ is continuous, then the law $\mu^z$ of $Z$ satisfies 
	\begin{equation*}
		\mathcal{W}_2(\mu^z, \nu) \le C_zH(\nu|\mu^z)^{1/4}\quad \text{for all } \nu \in \mathcal{P}(\Omega),
	\end{equation*}
	with $C_z:= 2\big(1 + \big(mL_F^2e^{(\max(L_g, \varphi(\Lambda)) + 1)^2T} + mT\max(L_g, \varphi(\Lambda))^2  \big)^4\big)^{1/4}$.
\end{corollary}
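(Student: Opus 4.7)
The plan is to reduce the corollary to Theorem~\ref{thm:bsde.multi.dim} and Lemma~\ref{lem:z bounded} by truncating the generator in the $z$-variable at level $\Lambda$, and then verifying a posteriori that the truncation is inactive as soon as $T$ is small enough. Concretely, I would introduce
\[\tilde g_t(\omega,y,z) := g_t\!\Big(\omega,\, y,\, z\min(1,\Lambda/|z|)\Big),\]
so that $\tilde g$ coincides with $g$ on $\{|z|\le \Lambda\}$, is Lipschitz in $(\omega,y)$ with constant $L_g$ and, because the projection onto the Euclidean ball of radius $\Lambda$ is $1$-Lipschitz and $g$ is $\varphi(\Lambda)$-Lipschitz in $z$ on that ball, is Lipschitz in $z$ with constant $\varphi(\Lambda)$. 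Hence $\tilde g$ is globally $\max(L_g,\varphi(\Lambda))$-Lipschitz, and the Malliavin regularity in (C) passes through the truncation since composition with a fixed Lipschitz map preserves the pointwise bound on $D^i_u\tilde g$.

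Applying Theorem~\ref{thm:bsde.multi.dim} to the pair $(\tilde g,F)$ yields a unique solution $(\tilde Y,\tilde Z)$ of the truncated BSDE and shows that the law of $\tilde Y$ satisfies $T_2(C_y)$ with exactly the constant stated in the corollary. Similarly, Lemma~\ref{lem:z bounded} applies to the truncated equation, giving continuity of paths of $\tilde Z$ together with the a priori estimate
\[|\tilde Z_t|^2 \le mL_F^2 \exp\!\big((\max(L_g,\varphi(\Lambda))+1)^2 T\big) + mT\,\max(L_g,\varphi(\Lambda))^2.\]

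The next and central step is to close the loop: since $\Lambda^2 = 2dm(L_F^2+TL_g^2) \ge 2mL_F^2$, the right-hand side above is strictly smaller than $\Lambda^2$ for every $T$ below a threshold depending only on $L_F,L_g,\varphi(\Lambda),m,d$. For such $T$ one has $|\tilde Z|\le \Lambda$ $P\otimes dt$-almost everywhere, so that $\tilde g(\cdot,\cdot,\tilde Z) = g(\cdot,\cdot,\tilde Z)$ and $(\tilde Y,\tilde Z)$ actually solves the original BSDE. Uniqueness of such solutions within the class of pairs with essentially bounded $Z$ then identifies $(\tilde Y,\tilde Z)$ with $(Y,Z)$, which transfers the $T_2(C_y)$ inequality from $\tilde Y$ to $Y$. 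For the law of $Z$, the $L^\infty$ bound just established combined with the continuity of $t \mapsto D^i_t F$ places us in the setting of the last paragraph of the proof of Lemma~\ref{lem:z bounded}: the exponential moments of $Z$ allow one to invoke Corollary~2.4 of \cite{Bol-Vil05} to obtain the $\mathcal{W}_2$-versus-$H^{1/4}$ estimate with the constant $C_z$ given in the statement.

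The main obstacle, and the only genuinely new input, is the self-consistency of the truncation: the bound from Lemma~\ref{lem:z bounded} for $\tilde Z$ depends on $\varphi(\Lambda)$, so choosing $\Lambda$ and choosing $T$ are intertwined. The specific value $\Lambda = (2dm(L_F^2+TL_g^2))^{1/2}$ is tailored so that the limit $mL_F^2$ of the estimate above as $T\to 0$ already falls strictly below $\Lambda^2$, and the role of the hypothesis ``$T$ small enough'' is exactly to absorb the exponential factor $\exp((\max(L_g,\varphi(\Lambda))+1)^2 T)$ without exhausting the slack factor $2d$ built into $\Lambda^2$.
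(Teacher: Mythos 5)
Your route differs from the paper's in the order of operations. The paper first invokes \cite[Theorem 3.1]{Kup-Luo-Tang18} to obtain existence, uniqueness, \emph{and} the a priori bound $|Z|\le\Lambda$ for $T\le \log(2)/(2L_g+\varphi^2(\Lambda)+1)$, and only then introduces the truncation $\hat g$ at level $\Lambda$, observing that $(Y,Z)$ solves the truncated (Lipschitz) equation so that Theorem \ref{thm:bsde.multi.dim} and Lemma \ref{lem:z bounded} apply. You instead truncate first, solve the Lipschitz equation by Theorem \ref{thm:bsde.multi.dim}, and then close a bootstrap: the bound of Lemma \ref{lem:z bounded} for the truncated solution, namely $|\tilde Z_t|^2\le mL_F^2e^{(\max(L_g,\varphi(\Lambda))+1)^2T}+mT\max(L_g,\varphi(\Lambda))^2$, falls below $\Lambda^2=2dm(L_F^2+TL_g^2)$ for small $T$ thanks to the slack factor $2d$, so the truncation is inactive and $(\tilde Y,\tilde Z)$ solves the original equation. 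This is a genuinely self-contained existence argument (it even sidesteps the Markovian framing of the cited theorem), and your identification of why the specific value of $\Lambda$ works is exactly the right observation; your handling of condition (C) under composition with the $1$-Lipschitz projection is also fine.

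There is, however, a gap in your uniqueness step. Your argument only yields uniqueness among solutions with $|Z|\le\Lambda$: a hypothetical second solution $(Y',Z')$ with $\|Z'\|_\infty\le M$ for some $M>\Lambda$ solves the BSDE with generator truncated at level $M$, whose Lipschitz constant is $\max(L_g,\varphi(M))$; the resulting bound from Lemma \ref{lem:z bounded} involves $\varphi(M)$ and need not fall below $\Lambda^2$ for the value of $T$ you fixed (which was calibrated to $\varphi(\Lambda)$, not $\varphi(M)$). So ``uniqueness within the class of pairs with essentially bounded $Z$'' does not follow from what you have written; you either need to restrict the uniqueness claim to the class $\{|Z|\le\Lambda\}$, or import the uniqueness statement from \cite[Theorem 3.1]{Kup-Luo-Tang18} as the paper does. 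A second, smaller point: since $\Lambda$ and hence $\varphi(\Lambda)$ depend on $T$, you should fix an upper bound such as $T\le 1$ first so that $\max(L_g,\varphi(\Lambda))$ is controlled by a $T$-independent constant before sending $T\to 0$ in the self-consistency inequality; you allude to this interdependence but do not resolve it, and the edge case $L_F=0$ requires separate (if trivial) treatment.
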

\begin{proof}
	As argued in the proof of Lemma \ref{lem:z bounded}, $F$ is Malliavin differentiable and has a derivative bounded by $L_F$.
	Therefore, the existence of a unique solution $(Y,Z)$ 
	follows from \cite[Theorem 3.1]{Kup-Luo-Tang18}, where it is further proved that the process $Z$ satisfies $|Z| \le \Lambda$,
	provided that $T\le \frac{\log(2)}{2L_g + \varphi^2(\Lambda)+1}$.
	The truncated function 
	\begin{equation*}
		\hat g_t(y,z) := \begin{cases}
			g_t(y,z) \quad \text{if } |z| \le \Lambda\\
			g_t(y,\Lambda z/|z|) \quad \text{if } |z| > \Lambda
		\end{cases}
	\end{equation*}
	is Lipschitz continuous with Lipschitz constant smaller than $\max(L_g, \varphi(\Lambda))$, and since $g_t(Y_t, Z_t) = \hat g_t(Y_t, Z_t)$ $P$-almost surely for every $t\in [0,T]$ we conclude by uniqueness that $(Y,Z)$ solves the equation \eqref{eq:bsde} with $g$ replaced by $\hat g$.
	Thus, the results follow by Theorem \ref{thm:bsde.multi.dim} and Lemma \ref{lem:z bounded}.
\end{proof}
	
\begin{remark}
	When the function $g$ is deterministic, it is automatically Malliavin differentiable and its Malliavin derivative is zero.
	In this case, all the conditions pertaining to the Malliavin derivative of $g$ in Lemma \ref{lem:z bounded} and Corollary \ref{cor:arb fast} are trivially satisfied.
	
	On the other hand, the smallness condition on the time horizon $T$ is necessary because $Y$ is a multidimensional process.
	It is well known that in the multidimensional case, and when the function $g$ is allowed to grow as fast as the quadratic function, backward SDE are typically ill-posed for arbitrary time horizons, see for instance \cite{Frei-dosReis11} for a discussion of this issue.
	The smallness condition is not necessary in one dimension.
\end{remark}

In Lemma \ref{lem:z bounded} and Corollary \ref{cor:arb fast} we derived a transportation inequality of the form $W_2(\mu^z,\nu)\le \phi(H(\mu^z|\nu))$ for the law of $Z$, with $\phi(x)= x^{1/4}$.
While this type of inequalities (extensively studies e.g.\ in \cite{Goz-Leo07}) allow to derive deviation inequalities, they do not tensorize or allow to derive other important inequalities as Poincar\'e inequality.

The next corollary provides a simple example under which Talagrand inequality holds for the law of $Z$.
It is the case of a linear equation.
\begin{corollary}
\label{cor:actual T2 for z}
	Assume that
	\begin{enumerate}[(A)]
	 	\item 
	 	\label{d1}
	 	$F\colon\Omega\to \mathbb{R}^m$ is Malliavin differentiable and its Malliavin derivatives $D^iF_t\colon\Omega\to \mathbb{R}^m$ are $L_F$-Lipschitz continuous.
	 	\item
	 	\label{d2} $g_t(\omega,y,z) = \alpha_t(\omega)+ \beta y + \gamma z$
for some constants $\beta,\gamma$ and a progressive, square integrable process $\alpha$ such that for each $t$, $\alpha_t$ is Malliavin differentiable and its derivative are $L_\alpha$-Lipschitz continuous.
	 \end{enumerate} 
	 	Let $(Y,Z)$ be the unique solution of equation \eqref{eq:bsde}.
	 Then, the 
	\begin{equation*}
		\text{the law } \mu^z  \text{ of } Z  \text{ satisfies } T_2(C_z) 
	\end{equation*}
	with $C_z: = 2(L_F+TL_G)^2e^{2TL_G}$ and $L_G:= \max(L_\alpha, |\beta|,|\gamma|)$.

	If in addition $F$ and $\alpha_t$ are respectively $L_F$- and $L_\alpha$-Lipschitz continuous, then 
	\begin{equation*}
		\text{the law } \mu^{y,z}  \text{ of } (Y,Z)  \text{ satisfies } T_2(C_{y,z}) 
	\end{equation*}
	with $C_{y,z}:= \max(C_y, C_z)$.
\end{corollary}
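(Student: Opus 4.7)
The plan is to show that $\omega\mapsto Z(\omega)$ is a Lipschitz map from $\Omega$ into $\Omega$ with constant $(L_F + TL_G)e^{TL_G}$, from which the desired $T_2(C_z)$ follows immediately by Lemma \ref{lem:stab push-forw} combined with the Feyel--\"Ust\"unel inequality $T_2(2)$ for Wiener measure \cite[Theorem 3.1]{Fey-Ust04}. The key idea is to identify $Z$ with the diagonal of the Malliavin derivative of $Y$ and to exploit the fact that this derivative itself solves a linear BSDE to which Theorem \ref{thm:bsde.multi.dim} applies.

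First I would invoke standard Malliavin calculus (e.g.\ \cite[Proposition 5.3]{karoui01}) to verify that under the hypotheses of the corollary, $(Y,Z)$ is Malliavin differentiable, $Z$ admits a continuous version, and for each $s\in[0,T]$ and $i\in\{1,\ldots,d\}$ the pair $(D^i_s Y, D^i_s Z)$ satisfies the linear BSDE
\[
D_s^i Y_t = D_s^i F + \int_t^T \bigl(D_s^i \alpha_u + \beta\, D_s^i Y_u + \gamma\, D_s^i Z_u\bigr)\,du - \int_t^T D_s^i Z_u\,dW_u,\quad t\in[s,T],
\]
together with the identification $Z_t^i = D_t^i Y_t$. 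For each fixed $s$ and $i$, this BSDE fits exactly into the framework of Theorem \ref{thm:bsde.multi.dim} on the horizon $[s,T]$: the terminal condition $D_s^i F$ is $L_F$-Lipschitz by assumption (A), and the generator $(\omega,y,z)\mapsto D_s^i\alpha_u(\omega)+\beta y+\gamma z$ is Lipschitz in $(\omega,y,z)$ with constant $L_G=\max(L_\alpha,|\beta|,|\gamma|)$ by assumption (B). Theorem \ref{thm:bsde.multi.dim} thus delivers
\[
|D_s^iY_t(\omega)-D_s^iY_t(\eta)|\leq (L_F+TL_G)e^{TL_G}\,\|\omega-\eta\|_\infty,
\]
uniformly in $s$, $i$ and $t\in[s,T]$. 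Specialising to $t=s$ and using $Z_s^i = D_s^iY_s$ gives the same Lipschitz bound for $\omega\mapsto Z_s(\omega)$; taking the supremum over $s\in[0,T]$ (which is permitted by continuity of $Z$) produces the path-space Lipschitz estimate, and Lemma \ref{lem:stab push-forw} yields $T_2(C_z)$.

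For the joint statement, note that under the additional assumption that $F$ and $\alpha_t$ themselves are $L_F$- and $L_\alpha$-Lipschitz in $\omega$, the original BSDE for $(Y,Z)$ already satisfies the hypotheses of Theorem \ref{thm:bsde.multi.dim}, which gives that $\omega\mapsto Y(\omega)$ is Lipschitz with constant $(L_F+TL_G)e^{TL_G}$ as well. Endowing $\Omega\times\Omega$ with the max norm, the joint map $\omega\mapsto(Y(\omega),Z(\omega))$ is then Lipschitz with constant $\max\bigl(\sqrt{C_y/2},\sqrt{C_z/2}\bigr)$, and a final application of Lemma \ref{lem:stab push-forw} yields $T_2$ with constant $\max(C_y,C_z)$.

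The main technical obstacle I expect is the Malliavin-calculus bookkeeping at the interface between the two-parameter process $(s,t)\mapsto D_s^iY_t$ and the one-parameter control process $Z$: namely, selecting a continuous version of $Z$ for which the diagonal identity $Z_s^i = D_s^iY_s$ holds simultaneously for every $s$ so that the pointwise Lipschitz bound in $s$ upgrades to a sup-norm bound, and checking that the integrability and measurability hypotheses fed into Theorem \ref{thm:bsde.multi.dim} hold uniformly in the parameter $s$. Given the linearity of the coefficients and the strong Lipschitz assumptions on the Malliavin derivatives of $F$ and $\alpha$, these verifications should be essentially routine.
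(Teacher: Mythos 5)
Your proposal is correct and follows essentially the same route as the paper: both identify $Z$ with the diagonal $Z_t^i=D_t^iY_t$ of the Malliavin derivative of $Y$, observe that $(D_s^iY,D_s^iZ)$ solves a linear BSDE whose terminal condition $D_s^iF$ and generator $D_s^i\alpha+\beta y+\gamma z$ satisfy the Lipschitz hypotheses of Theorem \ref{thm:bsde.multi.dim} with constant $L_G=\max(L_\alpha,|\beta|,|\gamma|)$, and then conclude via the Lipschitz estimate from that theorem's proof together with Lemma \ref{lem:stab push-forw}. The treatment of the joint law via the max norm on $\Omega\times\Omega$ also matches the paper's (brief) argument for the second claim.
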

\begin{proof}
 	It follows from \cite{karoui01} that for every $t \in [0,T]$, the pair $(Y_t, Z_t)$ is Malliavin differentiable and a version of the derivatives $(D^i_sY_t, D^i_sZ_t)$, $i=1, \dots, d$  satisfies the linear equations
 	\begin{equation}
 	\label{eq:Malliavin bsde}
 		D^i_sY_t = D^i_sF + \int_t^T D_s^i\alpha_r + \beta D^i_sY_r + \gamma D^i_sZ_r\,dr- \int_t^TD^i_sZ_r\,dW_r, 
 	\end{equation}
 	for $i = 1,\dots, d$ and $D_t^iY_t = Z_t^i$.
 	Moreover, by (B), the function $G(\omega, y,z)$ given by
 	\begin{equation*}
 		G(\omega, y, z) := D_s^i\alpha_r(\omega) + \beta y + \gamma z
 	\end{equation*}
 	is Lipschitz continuous, with Lipschitz constant $L_G:= \max(L_\alpha, |\beta|,|\gamma|)$.
 	Thus, it follows by Theorem \ref{thm:bsde.multi.dim} (and its proof) that the process $(D_tY_t)_t$ is a Lipschitz continuous function of $W$, and its law satisfies $T_2(C_z)$.
 	Since $Z_t = D_tY_t$ $P\otimes dt$-almost surely, the first claim follows. 

 	If $\alpha$ and $F$ are Lipschitz continuous, then it follows by Theorem \ref{thm:bsde.multi.dim} that $Y = \phi(\omega)$ for some Lipschitz continuous function $\phi$.
 	Thus, the second claim follows by Lemma \ref{lem:stab push-forw} since $(Y,Z)$ is a Lipschitz continuous function of $W$.
\end{proof}

\section{Proof for SDEs}
\label{sec:sde}

\begin{proof}[Proof of Theorem \ref{thm:SDE}]
	It follows from the conditions (B) and (C) that the function $b$ is bounded.
	Thus, the existence of a unique strong solution follows from \cite[Theorem 4]{Zvonkin}.
	
	Define the following three functions
	\begin{align*}
	f_t(x)&:=\exp\Big(\int_{-\infty}^x-2\frac{b_t(a)}{\sigma_t\sigma'_t(a)}\,da \Big),\\
	F_t( x)&:= \int_0^xf_t(a)\,da, \quad\text{and}\\
	G_t(x)&:= [F_t(\cdot)]^{-1}(x)
	\end{align*}
	for all $(t,x) \in [0,T]\times \mathbb{R}$.
	Note that it follows from the integrability assumption \eqref{ass:sde.b.divided.sigma} that $F_t(\cdot)$ is bijective and therefore $G$ is well-defined.
	Moreover, as $b_\cdot(x)$ and $\sigma_\cdot(x)$ are differentiable by assumption \eqref{ass:sde.b} and \eqref{ass:sde.sigma}, it follows from \eqref{ass:sde.b.divided.sigma} that $f_\cdot(x)$ (and therefore $F_\cdot(x)$) is differentiable for every $x\in\mathbb{R}$.
	Further $F_t(\cdot)$ is differentiable with derivative $\partial_x F_t(x)=f_t(x)$ for every $(t,x)\in[0,T]\times\mathbb{R}$ and $\partial_x F_t(\cdot)$ is absolutely continuous w.r.t.~Lebesgue measure for every $t\in[0,T]$.
	Thus, it admits a weak derivative which we denote $\partial_{xx} F_t(\cdot)$.
	That is, $F$ belongs to the Sobolev space  $W^{1,2}_2([0,T]\times \mathbb{R})$.
	
	Putting $Y_t: = F_t( X_t)$, it follows from It\^o-Krylov's formula  \cite[Theorem 2.10.1]{Krylov} 	that
	\begin{align*}
	Y_t&=Y_0 + \int_0^t \partial_tF_s(X_s) +\frac{1}{2}\partial_{xx} F_s(X_s)\sigma_s\sigma_s'(X_s)\,ds +\int_0^t \partial_x F_s(X_s) b_s(X_s)\,dW_s \\
	&=Y_0 + \int_0^t\partial_t F_s( G_s( Y_s))\,ds + \int_0^t f_s( G_s( Y_s))\sigma_s( G_s( Y_s))\,dW_s.
	\end{align*}
		 
	By assumption \eqref{ass:sde.b.divided.sigma} one has $\exp(-c_1)\leq f\leq \exp(c_1)$.
	Hence, for every $t\in[0,T]$, both mappings 
	\[F_t(\cdot)
	\quad\text{ and }\quad
	G_t(\cdot) \text{ are } \exp(c_1)\text{-Lipschitz}.\]
	Further, as $|\partial_t f|$ is bounded by $\exp(c_1)c_3$, it follows that
	\[\partial_t F_t(\cdot)=\int_0^\cdot \partial_t f_t(a)\,da \quad\text{is } c_3\exp(c_1)\text{-Lipschitz}\]
	for every $t\in[0,T]$.
	Thus $\partial_tF_t( G(t, \cdot))$ is $c_3\exp(2c_1)$-Lipschitz for every $t\in[0,T]$.
	Moreover 
	\begin{align*}
	f_t(\cdot) &\quad\text{is }c_2\exp(c_1)\text{-Lipschitz},\\
	f_t(G_t(\cdot)) &\quad\text{is } c_2\exp(2c_1)\text{-Lipschitz, and}\\
	\sigma_t(G_t(\cdot)) &\quad\text{is } L_\sigma\exp(c_1)\text{-Lipschitz}
	\end{align*} 
	for every $t\in[0,T]$.
	As $f$ is bounded by $\exp(c_1)$ and $\sigma$ by $\|\sigma\|_\infty$, one obtains that the product 
	\[f_t( G_t( \cdot))\sigma_t( G_t(\cdot)) \quad\text{is } \|\sigma\|_\infty c_2\exp(2c_1) + \exp(2c_1)L_\sigma\text{-Lipschitz}\]
	for every $t\in[0,T]$.

	Therefore, \cite[Theorem 1]{Uestuenel12} shows that the law $\mu^y$ of $Y$ satisfies $T_2(C_y)$ with 
	\[C_y=6\exp(15 \max( c_3\exp(2c_1, \|\sigma\|_\infty c_2\exp(2c_1) + \exp(2c_1)L_\sigma^2)).\]
	Moroever, by the above, $X=G(Y)$ and the mapping $G$ is $\exp(c_1)$-Lipschitz.
	An application of Lemma \ref{lem:stab push-forw} shows that $\mu^x$ satisfies $T_2(\exp(c_1) C_y)$.
\end{proof}

\begin{example}
	Let us come back to the Langenvin dynamic presented in the introduction, see \eqref{eq:langenvin}. 
	These equations play a fundamental role notably  in the modeling of physical phenomena, see for instance the survey of \cite{Schuss80} for applications in Physics, and the more recent mathematical treatments \cite{JKO98,Conforti2018}.
It follows by Theorem \ref{thm:SDE} that if $U'$ is integrable and bounded, then the law of $X$ satisfies $T_2(C)$ for some $C>0$.

It is well-known that if $U$ grows fast enough for $e^{-\lambda U}$ to be integrable, (and it holds $xU'(x)\ge c_1x^2 - c_2$ for some $c_1,c_2\ge0$) then the measure $\mu$ on $\mathbb{R}$ with density (with respect to Lebesgue measure) $e^{-\lambda U(x)}/\int_{\mathbb{R}}e^{-\lambda U(x)}\,dx$ is the unique stationary measure of the solution of the Langenvin equation, see e.g.\ \cite[Lemma 1.2]{Lac-Shk-Zha19}.
Since $U'$ does not depend on time, the constant $C_x$ in Theorem \ref{thm:SDE} is time independent.
Therefore, it follows by Theorem \ref{thm:SDE} that the measure $\mu$ satisfies $T_2(C)$.

It is well-known, see e.g.\ \cite[Theorem 5.2]{Ledoux01}, that log-concave measures\footnote{A probability measure $\mu$ is said to be log-concave if its density with respect to Lebesgue measure is of the form $e^{-U}$ with $U$ convex.} satisfy the quadratic transportation inequality.
Notice that in the above arguments we do not require $U$ to be convex, but simply a differentiable function on the real line.
Nevertheless, a standard tensorization argument for $T_2$ inequalities allows to extend the argument to measures on $\mathbb{R}^d$ with density of the form $e^{-\sum_{i=1}^dU(x_i)}$.
\end{example}

\section{Logarithmic-Sobolev inequality}
\label{sec:lsi}

In this final section we prove the logarithmic-Sobolev inequality for the law of $Y_t$.
This requires some notational preparations.

Let $H$ denote the Cameron-Martin space
\[H:= \Big\{h \in \Omega: h \text{ is absolutely continuous, $h_0=0$ and } \int_0^T|\dot h_s|^2\,ds<\infty\Big\}.\]
Then $H$ becomes a Hilbert space when equipped with the inner product $\langle h,g\rangle_{H}:= \int_0^T\dot h_s\dot g_s\,ds$ for $h,g \in H$ and associated norm $\|h\|_{H}:= \langle h,h\rangle_{H}^{1/2}$.
Let $F\colon\Omega\to \mathbb{R}$ be Malliavin differentiable, with $D_tF \in L^2$ for all $t\in[0,T]$.
Given $h \in H$,
consider the directional derivative
\begin{equation*}
	D^hF(\omega) := \lim_{\varepsilon \downarrow 0} \frac{F(\omega + \varepsilon h) - F(\omega)}{\varepsilon}.
\end{equation*}
This defines a continuous linear operator on $H$, so that by Riesz representation theorem, there is a map $\omega\mapsto \bar D_sF(\omega)$ with values in $H$ such that
\begin{equation*}
	D^hF(\omega) = \langle \bar D F(\omega), h\rangle_{H}.
\end{equation*}
It is well-known that $\bar D F = DF$ $P$-almost surely, see e.g.\ \cite[Remark B.6.2]{UsZa1}.
In particular, as $H$ is separable, this implies that $|DF|_H=\sup_{h\in H \text{ s.th.\ } |h|_H\leq 1} |D^hF|$ $P$-almost surely.
The following Lemma is in spirit the same as Lemma \ref{lem:stab push-forw}, stating that Lipschitz transformations preserve the the log-Sobolev inequality.
It is likely to be known, but we could not find a reference and therefore provide its proof.
Note however that the finite dimensional case is given in \cite[Section 1]{Col-Fig-Jha17}.

\begin{lemma}
\label{lem:stabil LSI}
	Let $\psi\colon (\Omega,\|\cdot\|_{\infty}) \to \mathbb{R}^m$ be $L_\psi$-Lipschitz continuous.
	Then $\psi_*P$ satisfies $LSI(2 T L^2_\psi)$.
\end{lemma}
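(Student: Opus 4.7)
My plan is to derive the $LSI(2TL_\psi^2)$ for $\psi_\ast P$ by transferring Gross's classical logarithmic Sobolev inequality on Wiener space through the Lipschitz map $\psi$. The key quantitative ingredient is the continuous embedding of the Cameron--Martin space into $(\Omega,\|\cdot\|_\infty)$ with operator norm equal to $\sqrt{T}$ (i.e.\ $\|h\|_\infty \leq \sqrt{T}\|h\|_H$ for every $h\in H$, which follows from Cauchy--Schwarz applied to $h(s)=\int_0^s\dot h_r\,dr$), and it is precisely this $\sqrt{T}$ that produces the factor $T$ in the final constant.

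First I would recall the Gross inequality
\begin{equation*}
    \mathrm{Ent}_P(G) \leq 2\, E_P\bigl[|DG|_H^2\bigr]
\end{equation*}
for every Malliavin-differentiable $G\colon \Omega\to\mathbb{R}$. For a smooth and compactly supported $f\colon \mathbb{R}^m\to\mathbb{R}$ I would set $G:= f\circ\psi$ and compute its $H$-directional derivative using the chain rule:
\begin{equation*}
    D^h(f\circ\psi)(\omega) = \nabla f(\psi(\omega))\cdot D^h\psi(\omega).
\end{equation*}
Since $\psi$ is $L_\psi$-Lipschitz in $\|\cdot\|_\infty$, the embedding above gives $|D^h\psi(\omega)|\leq L_\psi \|h\|_\infty\leq L_\psi\sqrt{T}\|h\|_H$, whence
\begin{equation*}
    |D^h(f\circ\psi)(\omega)| \leq L_\psi\sqrt{T}\,|\nabla f(\psi(\omega))|\,\|h\|_H.
\end{equation*}
Using the identification $|DG|_H=\sup_{\|h\|_H\leq 1}|D^hG|$ recalled immediately before the lemma, this yields the pointwise bound $|D(f\circ\psi)|_H^2\leq TL_\psi^2\,|\nabla f(\psi)|^2$. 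Plugging this into Gross's inequality and applying the change-of-variables formula,
\begin{equation*}
    \mathrm{Ent}_{\psi_\ast P}(f) = \mathrm{Ent}_P(f\circ\psi) \leq 2\,E_P[|D(f\circ\psi)|_H^2] \leq 2TL_\psi^2\int_{\mathbb{R}^m}|\nabla f|^2\,d(\psi_\ast P),
\end{equation*}
which is exactly $LSI(2TL_\psi^2)$ for the class of smooth compactly supported $f$. A standard density argument (approximating any $(\psi_\ast P)$-integrable differentiable $f$ by truncation and mollification while using lower semicontinuity of entropy) then extends the bound to the full class of test functions appearing in the definition of $LSI$.

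The main obstacle I anticipate is making the chain rule rigorous, because $\psi$ is only assumed to be $\|\cdot\|_\infty$-Lipschitz, not differentiable in any classical sense. I would handle this by approximating $\psi$ by smooth cylindrical functionals $\psi_n$ whose $\|\cdot\|_\infty$-Lipschitz constant remains controlled by $L_\psi$ (for instance via finite-dimensional projections of $W$ followed by standard mollification, or via the Ornstein--Uhlenbeck regularization $P_{1/n}\psi$, which is contractive on $\|\cdot\|_\infty$-Lipschitz functions), applying the argument above componentwise to each $\psi_n$, and passing to the limit using lower semicontinuity of entropy and Fatou's lemma for the Dirichlet-form side. Equivalently, one can invoke the well-known fact that $\|\cdot\|_\infty$-Lipschitz functionals on Wiener space belong to $\mathbb{D}^{1,\infty}$ and that the $H$-gradient coincides with the Malliavin derivative (the identification already recalled in the paper), which directly legitimizes the computation above without explicit approximation.
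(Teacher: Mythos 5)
Your proof is correct and follows essentially the same route as the paper: both transfer Gross's $LSI(2)$ on Wiener space through $\psi$ by bounding the $H$-directional derivative of $f\circ\psi$ via the Lipschitz property of $\psi$ and the embedding $\|h\|_\infty\leq\sqrt{T}\,\|h\|_H$, then identify $\mathrm{Ent}_{\psi_\ast P}(f)=\mathrm{Ent}_P(f\circ\psi)$. The paper handles the chain-rule issue you flag by bounding the difference quotient of $f\circ\psi$ directly (using only differentiability of $f$ and Lipschitz continuity of $\psi$) and by citing the Malliavin differentiability of Lipschitz functionals, which is one of the two remedies you propose.
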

\begin{proof}
	In a first step, note that by Lipschitz continuity of $\psi$, it is Malliavin differentiable (with derivative bounded by $L_\psi$), see e.g.\ \cite[Proposition 3.2]{Che-Nam}.
	Let $f\colon\mathbb{R}^m\to \mathbb{R}$ be differentiable. 
	We need to show that $\mathrm{Ent}_{\psi_\ast P } (f)\leq 2 T L_\psi^2 \int  |\nabla f| \,d(\psi_\ast P)$.
	
	To that end, let $\omega\in\Omega$ and $h\in H$ be arbitrary and note that
	\begin{align*}
	&\limsup_{\varepsilon \downarrow 0} \Big| \frac{f(\psi(\omega + \varepsilon h)) - f(\psi(\omega))}{\varepsilon} \Big| \\
	&\leq \limsup_{\varepsilon\downarrow 0} \Big| \nabla f(\psi(\omega)) \cdot\frac{\psi(\omega + \varepsilon h) - \psi(\omega)}{\varepsilon}  \Big|
	\leq |\nabla f(\psi(\omega))| \cdot L_\psi \|h\|_\infty
	\end{align*}
	by $L_\psi$-Lipschitz continuity of $\psi$ and the Cauchy-Schwarz inequality.
	Further, H\"older's inequality implies that $\|h\|_\infty\leq\sqrt{T} |h|_H$, hence 
	\[D^h (f\circ\psi)(\omega)\leq L_\psi \sqrt{T} \cdot |\nabla f(\psi(\omega))| \cdot |h|_H\]
	for every $h\in H$.
	By the discussion preceding the lemma, this therefore implies that
	\[ |D(f\circ\psi)|_H
	\leq  L_\psi \sqrt{T} \cdot |\nabla f \circ \psi| \]
	$P$-almost all surely.
		
	Now notice that $\mathrm{Ent}_P(f\circ\psi)=\mathrm{Ent}_{\psi_\ast P } (f)$ by the transformation lemma.
	Hence, as the Wiener meausre $P$  satisfies $LSI(2)$ by \cite{Gross75} (see also \cite{Cap-Hsu-Led97} and \cite[Theorem 1.1]{Gou-Wu06} for a formulation using the Malliavin gradient as ours), it follows  that
	\begin{align*}
		\mathrm{Ent}_{\psi_\ast P } (f)
		&=\mathrm{Ent}_P(f\circ\psi) \\
		&\le 2 \int_\Omega |D (f\circ \psi) |_{H}^2\,d P	
		\leq 2 T L_\psi^2 \int_{\mathbb{R}^m}  |\nabla f| \,d(\psi_\ast P).
	\end{align*}
	This proves the claim.
\end{proof}

\begin{proof}[Proof of Theorem \ref{thm:lsi y}]
	Recall from the proof of Theorem \ref{thm:bsde.multi.dim} that $Y\colon\Omega\to\Omega$ is $L_Y$-Lipschitz with $L_Y=\sqrt{C_y / 2}$ (where $C_y$ is the constant given in that theorem).
	In particular $Y_t\colon\Omega\to\mathbb{R}^m$ remains $L_Y$-Lipschitz.
	The proof is completed by an application of Lemma \ref{lem:stabil LSI}.
\end{proof}

\section*{Acknowledgements}
D.\ Bartl greatfully acknowledges support by the Austrian Science Fund under FWF project P28661.

\end{document}